\begin{document}

\setcounter{page}{285}
\publyear{2021}
\papernumber{2074}
\volume{182}
\issue{3}

 \finalVersionForARXIV
 %%\finalVersionForIOS
%%%%%%%%%%%%%%%%%%%%%%%%%%%%%%%%%

\title{Edge Forcing in Butterfly Networks}

\author{Jessy Sujana G. \\
Department of Computer Science and Engineering \\
Sri Sivasubramaniya Nadar College of Engineering\\
Chennai,  603 110, India \\
jessysujanag@ssn.edu.in
\and T.M. Rajalaxmi\thanks{Address for correspondence: Department of Mathematics, Sri Sivasubramaniya Nadar College
                   of Engineering, Chennai, 603 110, India. \newline \newline
          \vspace*{-6mm}{\scriptsize{Received  July 2021; \ revised July 2021.}}}
\\
Department of Mathematics \\
Sri Sivasubramaniya Nadar College of Engineering \\
Chennai, 603 110, India\\
laxmi.raji18@gmail.com \\
\and Indra Rajasingh\\
School of Advanced Sciences \\
Vellore Institute of Technology\\
Chennai,  600 127, India\\
indra.rajasingh@vit.ac.in\\
\and R. Sundara Rajan\\
Department of Mathematics \\
Hindustan Institute of Technology and Science \\
Chennai, 603 103, India\\
vprsundar@gmail.com
}

\maketitle

\runninghead{G. Jessy Sujana et al.}{Edge Forcing in Butterfly Networks}

\begin{abstract}
  A zero forcing set is a set $S$ of vertices of a graph $G$, called forced vertices of $G$, which are able to force the entire graph by applying the following process iteratively: At any particular instance of time, if any forced vertex has a unique unforced neighbor, it forces that neighbor. In this paper, we introduce a variant of zero forcing set that induces independent edges and name it as edge-forcing set. The minimum cardinality of an edge-forcing set is called the edge-forcing number. We prove that the edge-forcing problem of determining the edge-forcing number is NP-complete. Further, we study the edge-forcing number of butterfly networks. We obtain a lower bound on the edge-forcing number of butterfly networks and prove that this bound is tight for butterfly networks of dimensions 2, 3, 4 and 5 and obtain an upper bound for the higher dimensions.
\end{abstract}

\begin{keywords}
Zero forcing set, forced vertex, independent set, edge-forcing set, butterfly networks
\end{keywords}

\section{Introduction}

A propagation model can be related to an activation process in a graph. By iteratively applying an activation rule and by using an initial set of active vertices, the remaining vertices are activated in a graph. This process ends when there are no more vertices to be activated. This paper aims to find the minimum size of a set of such initially active vertices in a graph with a well defined forcing rule.

\medskip
The zero forcing problem states that, for graph $G$, the goal is to find a minimum set of nodes $S$ that forces all the other nodes, where a node $v$ is forced if and only if, $v$ is an element of the set $S$ or $v$ has a neighbor $u$ such that $u$ and all of its neighbors except $v$ are forced. The minimum cardinality of any such set is called zero forcing number of $G$ {\rm\cite{benson2015}}.

\medskip
Equivalently, we have the following definition:

\begin{definition} {\rm\cite{Daniela2018}}  For a graph $G=(V,E)$ and a set $T \subseteq V$, the closure of $T$ in $G$ denoted by $C_{G}(T)$is recursively defined as follows: Start with $C_G(T) = T$. As long as exactly one of the neighbors of some element of $C_G(T)$ is not in $C_G(T)$, add that neighbor to $C_{G}(T)$. If $C_G(T)=V$ at some stage, then $T$ is a zero forcing set of $G$. A forcing set of minimum cardinality is called the forcing number and is denoted by $\zeta(G)$. The forcing process is also called Graph Infection or Graph Propagation.
\end{definition}

Zero forcing number is a graph parameter introduced as a tool for solving minimum rank problem {\rm\cite{aim2008}. The idea of zero forcing set which is also termed ``infecting set" was introduced in 2007 by {\rm \cite{bg2007}} and {\rm \cite {bm2009}} in relation to quantum systems. The zero forcing problem of determining the forcing number is NP-complete {\rm\cite{shaun2016}}.  It is also used in theoretical computer science as a fast mixed search model {\rm\cite{Daniela2018}}. Zero forcing number was obtained in different types of graphs namely generalised Petersen graphs {\rm\cite{Saeedeh2019}}, cacti graphs {\rm\cite{Darren2011}}, graphs of large girth {\rm\cite{Randy2015}}, fixed bipartite graphs, random and pseudo-random graphs {\rm\cite{thomas2019}}. In addition to this, zero forcing was effected in snake graphs {\rm\cite{Anitha2019}}, wheel graphs {\rm\cite{Linda2012}}, fan graphs, friendship graphs, helm graphs {\rm\cite{Sakander2020}} and generalised Sierpinski graphs {\rm\cite{Ebrahim2019}}. An upper bound for zero forcing number of butterfly networks of dimension $r$ has been obtained as  $ \dfrac{1} {9} [ (3r+7)2^{r} + 2(-1)^{r}]$ in {\rm \cite{sudeep2017}}. The influence of removing a vertex or edge on the zero forcing number was studied and the propagation time for zero forcing on a graph was also determined in {\rm\cite{Leslie2012}}.}

\medskip
In electrical power systems, where an electrical node is represented as a vertex and a transmission line joining two electrical nodes by an edge, Phase Measurement Units (PMUs) are placed at selected vertices to regularly access or monitor electrical parameters like phase and voltage. Due to the high cost of a PMU, placing them at the locations of a minimum zero forcing set of the system, helps the monitoring of the entire system.

\medskip
Variants of forcing such as total forcing {\rm\cite{Michael2019}, connected forcing {\rm\cite{sudeep2017, Haynes2002, thomas2019, colton2018}} and $k$-forcing {\rm\cite{Daniela2011, bg2007, david2014}} have been considered by several authors. The total forcing problem has been proved to be NP-complete {\rm\cite{randy2017}}}.

\medskip
In this paper, we introduce a new problem called Edge-Forcing problem and it is defined as follows:

\begin{definition} Let $G$ be a graph. For a set $K$ of independent edges in $G$, define $T$ $(T$ depends on $K)$ to be the set of all end points of edges in $K$. The closure of $T$ in $G$ denoted by $C_{G}(T)$ is recursively defined as follows: Start with $C_G(T) = T$. As long as exactly one of the neighbors of some element of $C_G(T)$ is not in $C_G(T)$, add that neighbor to $C_{G}(T)$. If at some stage, $C_{G}(T) = V$, then $K$ is called an edge-forcing set of $G$. The minimum cardinality of an edge-forcing set of $G$ is the edge-forcing number of $G$ and is denoted by $\zeta_{e}(G)$. The edge-forcing problem of a graph $G$ is to  \linebreak determine $\zeta_{e}(G)$.
\end{definition}\vspace*{-2mm}

The above problem may also be viewed as coloring of vertices originating from a set of independent edges. Let $G$ be a graph in which every vertex is initially colored either black or white with at least one edge with both ends colored black. Let $e=(u,v)$ be an edge such that both $u$ and $v$ are colored black. If $u$ or $v$  is adjacent to exactly one white neighbor, say $x$, then we change the color of $x$ to black; this rule is called the color change rule. In this case we say ``$e$ forces $x$" which is denoted by $e \rightarrow x$. At a time, $e$ may force two vertices. The procedure of coloring a graph using the color change rule is called a forcing process. Given an initial coloring of $G$ in which a set of vertices inducing a set of independent edges is black and all other vertices are white, the derived set is the set of all black vertices resulting from repeatedly applying the color change rule until no more changes are possible. If the derived set from an independent set of edges is the entire vertex set of the graph, then the set of initial edges is called an edge-forcing set. It is also addressed as \linebreak $P_{2}$-forcing set.

\medskip
In Figure~\ref{figure1}, the edge marked in red in graph $G$ is an edge-forcing set. At the first time step, vertex 2 forces vertex 4, at second time step, vertex 1 forces vertex 3, at third time step, vertex 3 or 4 forces vertex 5, in the fourth time step, vertex 5 forces vertex 6.

\begin{figure}[h!]
\begin{center}
\includegraphics[scale=0.77]{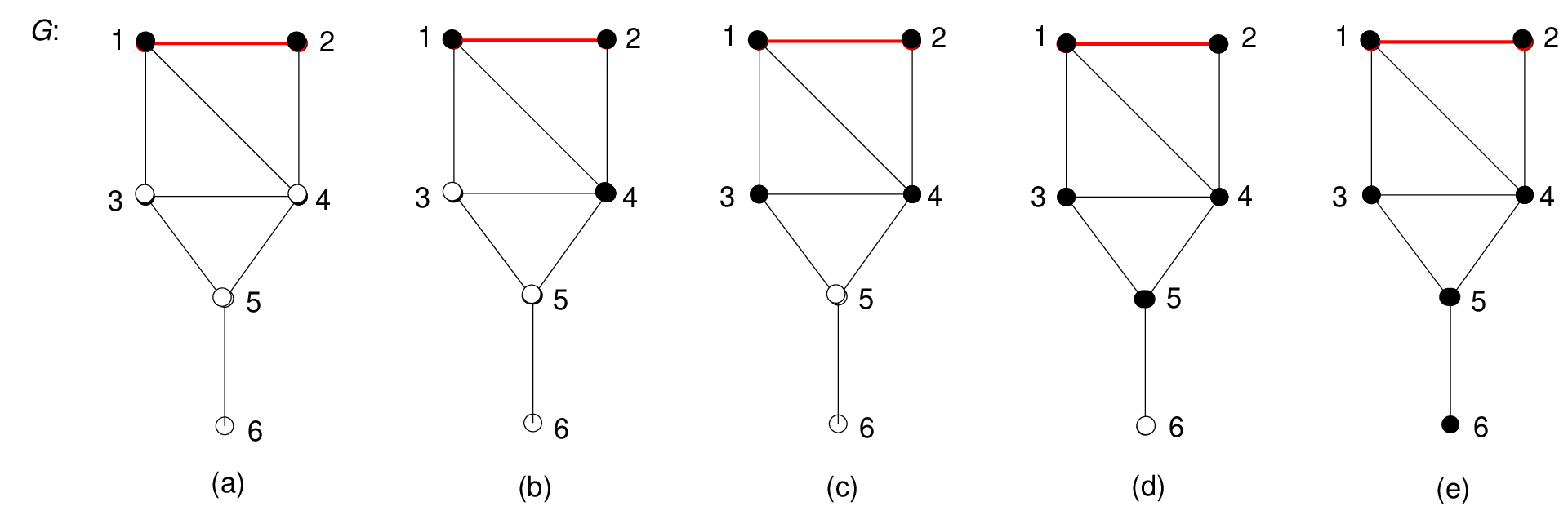}\vspace*{-5mm}
\end{center}\vspace*{-3mm}
\caption{Illustration of color change in $G$, with $\zeta_{e}(G)$=1. (a) $K=\{(1,2)\}$ with vertices 1 and 2 colored black, (b) 2 forces 4, (c) 1 forces 3, (d) 3 forces 5, (e) 5 forces 6 }
\label{figure1}
\end{figure}

Edge-forcing set ensures more reliability in the system. For example, the PMUs placed adjacent to each other in an electrical system can be used as backup servers so that if one becomes faulty, the adjacent PMU can support the system, thereby monitoring the entire system without any \linebreak  interruption.

\section{Complexity of edge-forcing problem}

In this section we prove that edge-forcing problem is NP-complete. The reduction will be from the NP-completeness of zero forcing problem {\rm\cite{shaun2016}}. Let $G=(V,E)$ be a graph. For $x \in V$, let $N(x)$ denote the open neighborhood of $x$ in $G$. Then construct the graph $\bar{G}=(\bar{V}, \bar{E})$ as follows. The vertex set $\bar{V}=V \cup V'$, where $V'=\{x':x \in V\}$. The edge set $\bar E=E\cup E' \cup E''$, where $E'=\{(x,x'): x\in V$ and $x' \in V' \}$ and $E''=\{(y,x'): y\in N(x), x, y \in V\}$. See Figure~\ref{figure2}.

\begin{figure}[h!]
\begin{center}
\includegraphics[scale=0.95]{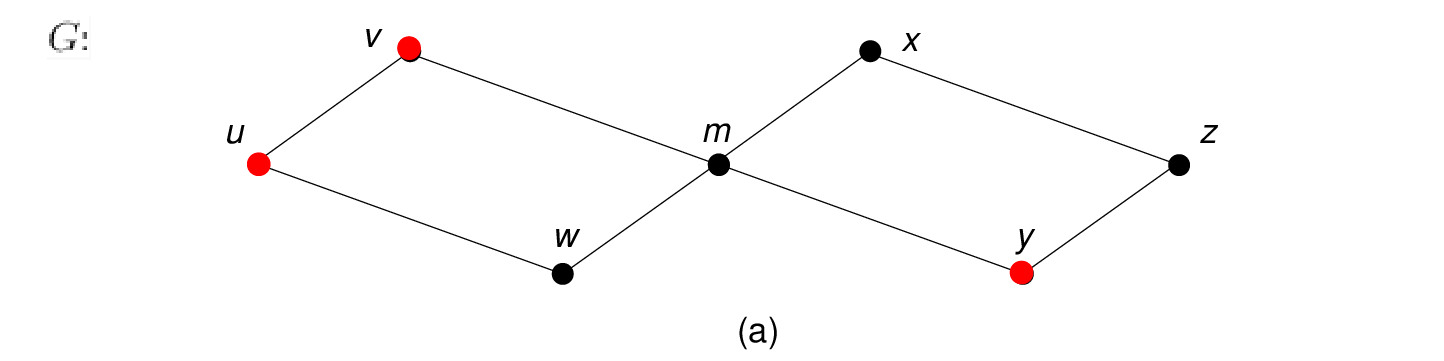}
\end{center}
\end{figure}
\begin{figure}[h!]
\vspace*{-5mm}
\begin{center}
\includegraphics[scale=0.9]{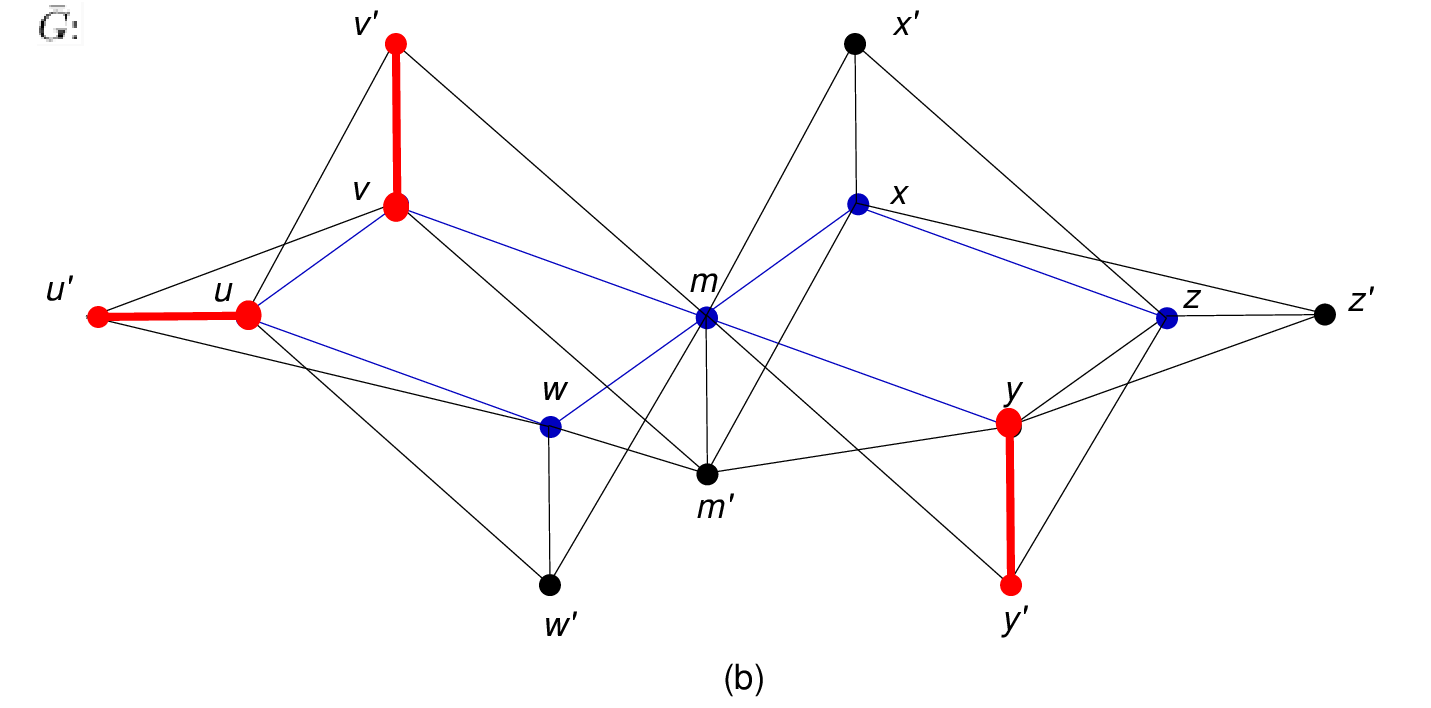}
\end{center}\vspace*{-6mm}
\caption{(a) Graph $G=(V,E)$, with zero forcing set marked in red, (b) $\bar G=(\bar V,\bar E)$, with edge-forcing set marked in red }
\label{figure2}\vspace*{-3mm}
\end{figure}

\begin{lemma}
If $X$ is an edge-forcing set of $\bar G$, then there exists an edge-forcing set $Y$ of $\bar G$ with $|Y|=|X|$, such that $Y\subseteq E'$.
\end{lemma}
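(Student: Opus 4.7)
The plan is to establish the lemma via an iterated exchange. Given $X$, as long as some edge $e \in X$ lies outside $E'$, I would replace $e$ by an edge of $E'$ that shares one of its endpoints, maintaining both the independence of the collection and the edge-forcing property, with the total number of edges unchanged. Iterating terminates with $Y \subseteq E'$ of size $|X|$.

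The argument splits into two cases according to whether $e$ belongs to $E$ or $E''$. If $e = (y, x') \in E''$, the natural candidate for the replacement is $e^* = (x, x') \in E'$, sharing the endpoint $x'$ with $e$; alternatively one may try $(y, y') \in E'$, sharing $y$. If $e = (u, v) \in E$, the candidates are $(u, u')$ and $(v, v')$. In every case, $e^*$ is a matching edge of $E'$ that shares a vertex with $e$.

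For each proposed swap I would verify two things. Independence is checked using the structural fact that $N_{\bar G}(w') = \{w\} \cup N_G(w)$ for every $w \in V$: any edge of $X$ already incident with the new $V'$-endpoint introduced by $e^*$ would have to share a vertex with $e$, contradicting independence of $X$. In the occasional case where both candidate swaps conflict with other edges of $X$, I would perform simultaneous swaps of several conflicting edges at once, selecting the $V$-representatives so that they are globally distinct; I expect this selection can be justified by a Hall-type combinatorial argument on the bipartite graph of edges versus admissible representatives.

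The main obstacle, and the heart of the proof, is verifying that the new endpoint set still edge-forces $\bar G$. The key structural observation is that for every $x \in V$ the neighborhood $N_{\bar G}(x') = \{x\} \cup N_G(x)$ is strictly contained in $\{x\} \cup N_{\bar G}(x)$, so once the pair $\{x, x'\}$ sits in the initial black set, $x'$ can rapidly force any remaining unforced neighbor in $V$, and conversely $x$ can quickly force $x'$ together with the shadows $\{z' : z \in N_G(x)\}$. I would then trace the forcing sequence chronologically from the new initial endpoint set and show that every vertex removed by the swap from the initial set of $X$ can be recovered by a short preliminary phase of forces, after which the original forcing sequence of $T_X$ completes the propagation.
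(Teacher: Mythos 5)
Your overall strategy --- an exchange argument that swaps each edge of $X$ lying in $E\cup E''$ for an $E'$-edge at one of its endpoints --- is exactly the paper's approach. However, two points keep the write-up from being a proof. First, for an $E''$-edge $(y,x')$ your \emph{primary} candidate $(x,x')$ retains the $V'$-endpoint, and this is precisely the choice that creates the independence conflicts you then try to repair with a Hall-type selection: another edge of $X$ may well be incident with $x$. The paper sidesteps this entirely by always retaining the $V$-endpoint, i.e.\ $(y,x')\mapsto(y,y')$ and $(a,b)\mapsto(a,a')$. Since $X$ is a set of independent edges, each vertex of $V$ lies on at most one edge of $X$, so the retained $V$-endpoints are pairwise distinct, the images $(w,w')$ are pairwise disjoint, and they cannot meet the untouched edges of $X\cap E'$ (which have the form $(c,c')$ with $c$ not on any other edge of $X$). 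With that choice all swaps can be done simultaneously and no Hall-type combinatorial argument is needed; invoking one and merely ``expecting'' it can be justified leaves a genuine hole where a one-line observation would do.

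Second, the heart of the lemma --- that the swapped initial set still forces all of $\bar G$ --- is announced as a plan (``I would then trace the forcing sequence\ldots'') rather than carried out. The delicate points are the vertices you \emph{remove} from the initial black set: $v'$ when $(u,v')\in E''$ is replaced by $(u,u')$, and $b$ when $(a,b)\in E$ is replaced by $(a,a')$. You assert these can be ``recovered by a short preliminary phase of forces,'' after which monotonicity of the closure finishes the job; that is the right framework, but no such phase is exhibited, and it is not automatic (for instance, $a'$ can force $b$ only once every other neighbour of $a'$, namely $a$ and $N_G(a)\setminus\{b\}$, is already black). To be fair, the paper's own proof also states the corresponding claim without detail, but your proposal does not supply the missing verification either; as written it is an outline of the paper's argument with an added, unnecessary complication rather than a complete proof.
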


\begin{proof}We have the following two cases.
\\
Case 1: If $X\subseteq E'$, then $Y=X$.\\
Case 2: If $X\not\subseteq E'$, then $X$ may contain edges from $E$ as well as $E''$. Suppose $X$ contains an edge $(u,v') \in E''$. Then replace the edge $(u,v')$ in $X$ with $(u,u')$, since the vertices $\{u,u'\}$ force all the vertices which the vertices $\{u,v'\}$ would force. Suppose $X$ contains an edge $(a,b) \in E$. Then replace the edge $(a,b)$ in $X$ with either $(a,a')$ or $(b,b')$, since the vertices $\{a,a'\}$ or $\{b,b'\}$ force all the vertices which the vertices $\{a,b\}$ would force. The final set of edges thus obtained on replacing edges of $E\cup E''$ with the respective edges in $E'$, is taken as $Y$. Clearly $Y$ is an edge-forcing set of $\bar G$  with $|Y|=|X|$ and $Y\subseteq E'$.
\end{proof}

Now we can prove the essential part for our reduction.

\begin{lemma}
Every zero forcing set of $G$ induces an edge-forcing set of $\bar G$ of the same cardinality and conversely.
\end{lemma}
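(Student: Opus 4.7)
The plan is to identify zero forcing sets $S\subseteq V$ of $G$ with edge-forcing sets $K_S=\{(x,x'):x\in S\}\subseteq E'$ of $\bar G$, a cardinality-preserving bijection. By the previous lemma, every edge-forcing set of $\bar G$ has the same cardinality as some $K_S$, so it suffices to prove that $S$ is a zero forcing set of $G$ if and only if the vertex set $T=S\cup S'$ has closure $\bar V$ in $\bar G$.

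For the direct implication, I would list a $G$-forcing sequence from $S$ as $v_1,\ldots,v_m$ with $v_i$ forced by some $u_i\in S\cup\{v_1,\ldots,v_{i-1}\}$, and simulate it in $\bar G$ by a paired sequence: at stage $i$, first use $u_i'$ to force $v_i$, then use $u_i$ to force $v_i'$. The checks are routine, using $N_{\bar G}(u_i')=\{u_i\}\cup N_G(u_i)$ for the first step and $N_{\bar G}(u_i)=N_G(u_i)\cup\{u_i'\}\cup\{z':z\in N_G(u_i)\}$ for the second, together with the induction hypothesis that every $v_j$ and $v_j'$ with $j<i$ is already black.

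For the converse, I would fix a $\bar G$-forcing sequence starting from $T$ and project onto the events that newly blacken a vertex of $V$. There are three possibilities for such an event: (A) some $u\in V$ forces $v\in V$; (C) some $u'\in V'$ forces $v\in N_G(u)$ with $v\neq u$; (D) some $u'\in V'$ forces $u$ itself. In types (A) and (C), the unique-white-neighbor condition in $\bar G$, combined with $N_{\bar G}(u)\cap V=N_G(u)$ and $N_{\bar G}(u')\setminus\{u\}=N_G(u)$, immediately yields the $G$-forcing rule $u\to v$, giving a legal event in the projected sequence.

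The main obstacle is excluding type (D), which has no $G$-analogue. I would handle it by an auxiliary induction on the $\bar G$-sequence proving the invariant ``$u'$ black implies $u$ black''. The base case is immediate since $T=S\cup S'$. Since $V'$ is an independent set in $\bar G$, the only way to newly blacken $u'$ is via a $V$-vertex $w$ forcing $u'$; this requires every $\bar G$-neighbor of $w$ other than $u'$ to be black at that moment, and in particular every vertex of $N_G(w)$ is already black. If $w=u$ then $u$ is black directly; if $w\neq u$ then the adjacency $(w,u')\in E''$ gives $u\in N_G(w)$, so $u$ is again black. Hence type (D) never occurs, the projected sequence is a legal $G$-forcing from $S$ that blackens every vertex of $V$, and $S$ is a zero forcing set of $G$; cardinalities match by construction.
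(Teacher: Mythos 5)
Your proof is correct and follows the same overall route as the paper: identify $S\subseteq V$ with $K_S=\{(x,x'):x\in S\}$, reduce to edge-forcing sets contained in $E'$ via the previous lemma, and simulate each $G$-force $u\to v$ by the pair of $\bar G$-forces $u'\to v$, $u\to v'$. The difference is in the converse direction, where the paper simply asserts that the forcing in $\bar G$ from $S\cup S'$ proceeds in matched pairs $\{u,u'\}\to\{v,v'\}$ and that the projection to $G$ ``is evident.'' You instead classify the $\bar G$-events that blacken a vertex of $V$ and isolate the one genuinely problematic case --- $u'$ forcing $u$ along the $E'$-edge, which has no $G$-analogue --- and rule it out with the invariant that $u'$ black implies $u$ black (using that $V'$ is independent and that any $w$ forcing $u'$ has $u$ among its other neighbors). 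This closes a real gap that the paper's one-line converse leaves open, at the cost of a longer argument; the paper's version is shorter but does not justify why an arbitrary forcing order in $\bar G$ projects to a legal forcing order in $G$.
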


\begin{proof}
Suppose $S$ is a zero forcing set of $G$. Considering the edge set $S'=\{(x,x'): x\in S\}$ in $\bar G$, if $u\in S$ forces a vertex $v\in V\backslash S$ in $G$, then in $\bar G$ initially $u'$ forces $v$, after which $u$ forces $v'$. Thus the vertices $\{u,u'\}\in S'$ force the vertices $\{v,v'\}\in V'\backslash S'$ in $\bar G$. It is clear that vertices of set $S'$ iteratively force all the vertices of $\bar G$. Hence $S'$ is an edge-forcing set of $\bar G$.

\medskip
Conversely, suppose $X$ is an edge-forcing set of $\bar G$. By Lemma 2.1 we can obtain a set $S'\subseteq E'$ which is an edge-forcing set of $\bar G$. Consider vertex set $S=\{x: (x,x')\in S'\}$  in $G$. If the vertices $\{u,u'\}\in S'$ force the vertices $\{v,v'\}\in V'\backslash S'$ in $G$, it is evident that $u\in S$ forces the vertex $v\in V\backslash S$ in $G$. Hence $S$ is a zero forcing set of $G$.
\end{proof}
Note that Lemma 2.2 also implies that $S\subseteq V$ is a minimum zero forcing set of $G$ if and only if $S'=\{(x,x') : x\in S\}$ is a minimum edge-forcing set of  $\bar G$. Combining this result with the fact that the graph $\bar G$ can clearly be constructed from $G$ in polynomial time, we arrive at the following main result.

\begin{theorem}
The edge-forcing problem is NP-complete.
\end{theorem}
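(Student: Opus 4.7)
The plan is to prove NP-completeness in the standard two-step manner: first exhibit membership in NP, then establish NP-hardness by invoking the reduction already set up through Lemmas~2.1 and 2.2.

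For membership in NP, I would exhibit a polynomial-time verifier for the decision version ``is $\zeta_e(G)\le k$?''. A natural certificate is a set $K$ of at most $k$ edges. Verification amounts to two checks: (i) $K$ is a matching, which takes $O(|K|^2)$ time; and (ii) iterating the color-change rule starting with the endpoints of $K$ colored black eventually colors every vertex. Since each successful application of the rule enlarges the closure by one vertex, the process terminates in at most $|V|$ rounds, and each round can be carried out in polynomial time by scanning, for every currently black vertex, whether it has a unique white neighbor. Hence the verifier runs in time polynomial in $|V|+|E|$.

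For NP-hardness, I would reduce from the zero forcing decision problem, known to be NP-complete by \cite{shaun2016}. Given an instance $(G,k)$, construct $\bar G=(\bar V,\bar E)$ exactly as defined immediately before Lemma~2.1. This construction is polynomial: $|\bar V|=2|V|$ and $|\bar E|=|E|+|V|+\sum_{x\in V}\deg(x)=3|E|+|V|$, and every edge of $\bar E$ is trivial to enumerate from the adjacency structure of $G$. By Lemma~2.2, together with the remark on minimality that follows it, $G$ admits a zero forcing set of size at most $k$ if and only if $\bar G$ admits an edge-forcing set of size at most $k$. Composing this equivalence with membership in NP yields the theorem.

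I do not anticipate a real obstacle, since all technical content has already been absorbed into Lemmas~2.1 and 2.2. The only points that remain are routine: confirming that the size of $\bar G$ and the construction time are polynomial in the size of $G$, and confirming that checking independence of a proposed edge set and simulating the forcing process are both polynomial. Both are immediate from the definitions.
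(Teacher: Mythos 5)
Your proposal is correct and follows essentially the same route as the paper: NP-hardness via the construction of $\bar G$ together with Lemmas~2.1 and 2.2 and the observation that the construction is polynomial. The explicit NP-membership verifier you supply is a routine step the paper leaves implicit, so nothing substantive differs.
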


\section{Butterfly networks}
Interconnection network is a connection pattern of the components in a system. This is necessary for fast and trusted communication among systems in any parallel computer. The development of large scale integrated circuit technology has led to the growth of complex interconnection networks {\rm \cite{Konstantinidou1993}}. Graph Theory is used in the analysis and design of these complex networks {\rm \cite{SG2010}}.

\medskip
An interconnection network is characterised as a model in graph theory where vertices and edges are represented by devices and their communication links  respectively. In a multistage network, $N$ inputs are connected to $N$ outputs. The butterfly network is an important multistage interconnection network which has an attractive architecture for communication {\rm \cite{Paul2013}}. It overcomes few of the disadvantages of the hypercube, which is the standard network used in industries. Some of its advantages are high bandwidth, small diameter and constant degree switches. Butterfly networks are used to perform a method to demonstrate fast fourier transform used in the area of signal processing {\rm \cite{Indra2011}}.

\begin{definition} {\rm \cite{Indra2011}}
The $r$-dimensional butterfly network $BF(r)$, has a vertex set
$V=\left\{\left[\left(x_{1},x_{2},\ldots,\right.\right.\right.$ $\left.x_{r}\right); \left.i\right]/x_{i}=\left.0\ or\ 1,\ 0 \leq i\leq r\right\}$ and\ vertices $\left[\left(x_{1},x_{2},\ldots,x_{r}\right);i\right]$ and $\left[\left(y_{1},y_{2},\ldots,y_{r}\right);j\right]$ are connected if $ j=i+1$ and either $\left(x_{1},x_{2},\ldots,x_{r}\right)=\left(y_{1},y_{2},\ldots,y_{r}\right)$ or $\left(x_{1},x_{2},\ldots,x_{r}\right)$ and \linebreak $\left(y_{1},y_{2},\ldots,y_{r}\right)$ differ precisely in the $j^{th}$ bit.
\end{definition}

\begin{remark} The vertices $\left[\left(x_{1},x_{2},\ldots,x_{r}\right);i\right]$ are said to be at Level $i$, $0 \leq i\leq r$. If the end vertices of $\left[\left(x_{1},x_{2},\ldots,x_{r}\right);i\right]$ and $\left[\left(y_{1},y_{2},\ldots,y_{r}\right);i+1\right]$ of an edge satisfy the condition that $\left(x_{1},x_{2},\ldots,x_{r}\right)\linebreak = \left(y_{1},y_{2},\ldots,y_{r}\right)$, then the edge is called a straight edge. Otherwise the edge is called a cross edge. It is customary to write the binary sequence $\left(x_{1},x_{2},\ldots,x_{r}\right)$ as a decimal number.
\end{remark}

\begin{figure}[h!]
\begin{center}
\includegraphics[scale=0.58]{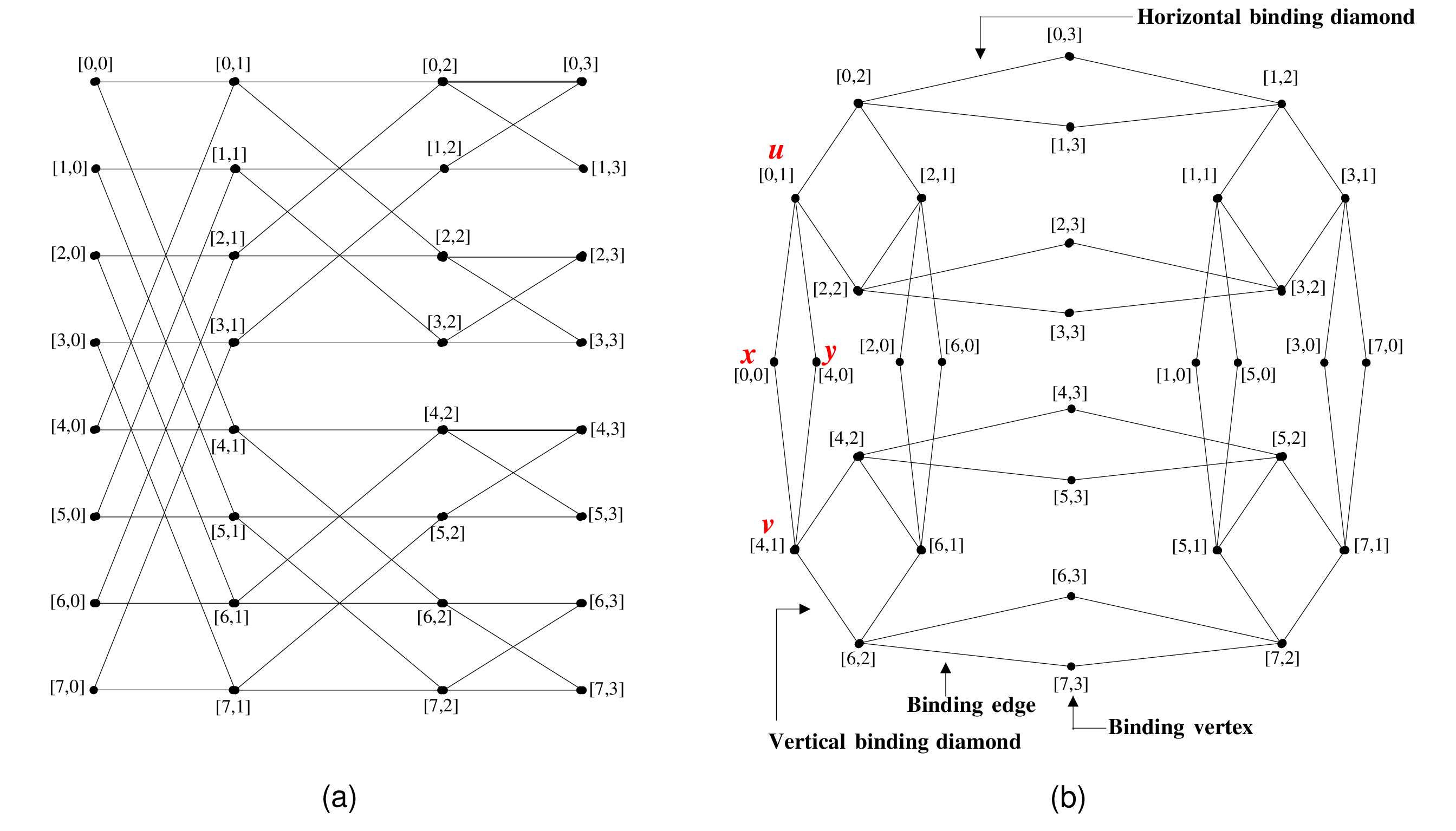}
\end{center}\vspace*{-7mm}
\caption{Butterfly network $BF(3)$; (a) Normal representation with vertices labelled using decimal numbers, (b) Corresponding diamond representation}
\label{figure3ab}\vspace*{1mm}
\end{figure}

An efficient representation for butterfly network has been obtained by Manuel et al., {\rm \cite{Paul2013}}. The butterfly network in Figure~\ref{figure3ab}(a) is the normal representation; an alternative representation, called the diamond representation, is given in Figure~\ref{figure3ab}(b). By a diamond we mean a cycle of length 4.
Two nodes $\left[ w, i \right]$ and $\left[ w', i' \right]$ in $BF(r)$ are said to be mirror images of each other if $w$ and $w'$ differ precisely in the first bit. The removal of Level 0 vertices $\left\{v_{1}, v_{2}, \ldots, v_{2^{r}} \right\}$ of $BF(r)$ gives two subgraphs $H_{1}$ and $H_{2}$ of $BF(r)$, each isomorphic to $BF(r-1)$. Since $\left\{v_{1}, v_{2}, \ldots, v_{2^{r}} \right\}$ is a vertex-cut of $BF(r)$, the vertices are called binding vertices of $BF(r)$. If a 4-cycle in $BF(r)$ has binding vertices then it is called a binding diamond. The edges of binding diamonds are called binding edges. Such diamonds are also obtained when vertices of $BF(r)$ at Level $(r+1)$ are removed. To distinguish between the two, we call the binding diamonds defined by removing the vertices at Level 0 as vertical binding diamonds and those defined by removing vertices at Level $(r + 1)$ as horizontal binding diamonds. The two types of diamonds are shown in Figure~\ref{figure3ab}(b).

\section{Edge-forcing in butterfly networks}
For determining a lower bound for the edge-forcing number in butterfly networks we consider the diamond representation of $BF(r)$, and for the actual computation of edge-forcing number, we consider the normal representation. We observe that when $r$ is even, the number of levels in $BF(r)$ is odd and vice versa.

\begin{theorem}
The edge-forcing set does not exist for $BF(2)$.
\end{theorem}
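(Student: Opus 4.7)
The plan is to argue by contradiction: suppose an edge-forcing set $K$ of $BF(2)$ exists, and let $T$ be its set of endpoints. The structure of $BF(2)$ gives three preliminary facts. It has three levels, Level $0$, $1$, $2$, each with four vertices; Level $1$ vertices have degree $4$ while Level $0$ and Level $2$ vertices have degree $2$; and every edge has exactly one endpoint at Level $1$. Since $K$ is a matching, no two of its edges share a Level $1$ endpoint, so $|K|\le 4$ and $|T|\le 8$.

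Next I would isolate four ``bi-pairs'': the two Level $0$ vertices of each vertical binding diamond and the two Level $2$ vertices of each horizontal binding diamond. For any such bi-pair $\{u,u'\}$, both $u$ and $u'$ have exactly the same two Level $1$ neighbors. A short case check of the color-change rule shows that if both $u$ and $u'$ are initially white, then neither can ever be forced: each Level $1$ vertex that could force $u$ still has $u'$ as a white neighbor, and symmetrically. Hence $T$ must contain at least one vertex from each of the four bi-pairs, so $T$ has at least two Level $0$ vertices and at least two Level $2$ vertices.

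Now I would bring in the matching structure. Since Level $1$ vertices are pairwise non-adjacent and Level $0$ is non-adjacent to Level $2$, every Level $1$ vertex of $T$ must be matched in $K$ to a Level $0$ or Level $2$ vertex of $T$, and conversely. Writing $n_i$ for the number of Level $i$ vertices of $T$, this gives $n_1=n_0+n_2$. Combined with $n_0\ge 2$, $n_2\ge 2$, and $n_1\le 4$, it forces $n_0=n_2=2$ and $n_1=4$, so $T$ consists of all four Level $1$ vertices together with exactly one vertex from each of the four bi-pairs, and $|T|=8$.

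The final step is to check that no such $T$ is in fact a zero forcing set. For any Level $1$ vertex $w$, the Level $0$ part of its neighborhood is one complete bi-pair and the Level $2$ part is another complete bi-pair, so exactly two of $w$'s four neighbors lie in $T$ and $w$ has two white neighbors. Each Level $0$ or Level $2$ vertex of $T$ has both of its two neighbors at Level $1$, all of which lie in $T$, so it has no white neighbor. Hence no vertex of $T$ can perform an initial force, $C_{BF(2)}(T)=T\ne V$, and $K$ is not an edge-forcing set, contradicting the assumption. The most delicate piece is the circular-blocking argument for a single bi-pair; the remaining steps reduce to clean counting.
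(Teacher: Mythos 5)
Your proof is correct and follows essentially the same route as the paper's: both hinge on the observation that the two degree-$2$ vertices of a binding diamond share the same neighborhood and hence one must lie in $T$, combined with the fact that a matching in $BF(2)$ has at most four edges, leading to a stalled configuration. Your write-up is somewhat more explicit than the paper's (in particular, you verify directly that every Level~$1$ vertex retains two white neighbors, a step the paper asserts without detail), but the underlying argument is the same.
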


\begin{proof} There are four edge disjoint 4-cycles in $BF(2)$. If none of the edges in a 4-cycle is present in an edge-forcing set, then both vertices of degree 2 in the 4-cycle cannot be forced by any edge in the edge-forcing set. Hence every 4-cycle contributes an edge to the forcing set. See Figure~\ref{figure4}. Choice of these independent edges cannot force the left out vertices of degree 2. Hence one more edge is required in the forcing set but this induces a path of length 2 in the forcing set, a contradiction.
\end{proof}

\begin{figure}[h!]
\vspace*{-1mm}
\includegraphics[scale=0.8]{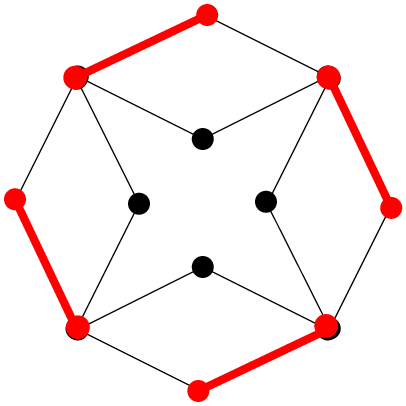}
\centering
\caption{Independent set of edges marked in red in $BF(2)$}
\label{figure4}\vspace*{-2mm}
\end{figure}

\begin{lemma}
The edge-forcing number of $BF(r)$, $r\geq 3$ is at least $2^r$.
\end{lemma}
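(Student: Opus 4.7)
The plan is to exploit the $2^{r+1}$ degree-$2$ vertices of $BF(r)$, all of which sit at Level~$0$ and Level~$r$ as the ``tips'' of the $2^r$ boundary diamonds ($2^{r-1}$ vertical binding diamonds and $2^{r-1}$ horizontal binding diamonds). I would first prove a local claim, that every boundary diamond must contribute at least one edge to any edge-forcing set $K$, and then count these diamonds using the fact that they are pairwise edge-disjoint.

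For the local claim, fix a boundary diamond $D$ with degree-$2$ tips $u_1,u_2$ and degree-$4$ corners $v_1,v_2$, noting that every edge of $BF(r)$ incident to $u_1$ or $u_2$ lies inside $D$. Suppose for contradiction that $K$ contains no edge of $D$. Then neither $u_1$ nor $u_2$ is an endpoint of any edge of $K$, so both start white. Consider the first instant in the forcing process at which one of them is blackened; without loss of generality this is $u_1$, forced by (say) $v_1$. At that instant $u_2$ is still white and is a neighbor of $v_1$, contradicting the requirement that $u_1$ be the unique white neighbor of $v_1$; the symmetric argument rules out $v_2$. Hence $u_1$ and $u_2$ can never be forced, so $K$ is not an edge-forcing set.

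For the counting, the vertical binding diamonds partition the $2^r$ Level-$0$ vertices into $2^{r-1}$ pairs, yielding $2^{r-1}$ mutually vertex-disjoint diamonds whose edges lie entirely between Levels $0$ and $1$. Symmetrically there are $2^{r-1}$ mutually vertex-disjoint horizontal binding diamonds with edges between Levels $r-1$ and $r$. For $r\geq 3$ these two strata involve disjoint level-pairs, so the entire collection of $2^r$ boundary diamonds is pairwise edge-disjoint. Combined with the local claim, each diamond contributes a distinct edge to $K$, giving $|K|\geq 2^r$.

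The step I expect to need the most care is the ``first blackening'' argument: one must rule out that $u_1$ and $u_2$ somehow turn black simultaneously, and that the degree-$4$ corners $v_1,v_2$---which may well be endpoints of edges of $K$ lying outside $D$, or may have been forced earlier from Level~$2$---somehow enable forcing of $u_1$ or $u_2$ without violating the uniqueness rule. Both concerns dissolve once one observes that the closure operation adds vertices strictly one at a time, so at the very moment the first of $\{u_1,u_2\}$ is blackened, the other is still white and remains a neighbor of whichever corner attempts the forcing.
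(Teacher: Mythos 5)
Your proof is correct and takes essentially the same approach as the paper's: both argue that each of the $2^r$ pairwise edge-disjoint binding diamonds must contribute an edge to the forcing set, because its two degree-$2$ tips share both of their neighbors and hence neither can ever become the unique white neighbor of a corner while the other is still white. Your explicit ``first blackening'' argument simply fills in the detail behind the paper's terse statement that even if the degree-$4$ corners are forced, the two tips cannot be.
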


\begin{proof}
Consider a binding diamond $D$ of $BF(r)$. Assume that none of the edges of $D$ belongs to an edge-forcing set of $BF(r)$. Let $x$ and $y$ be the binding vertices of degree 2 in $D$. Both $x$ and $y$ are adjacent to two vertices $u$ and $v$ of degree 4 in $D$. For illustration, see Figure~\ref{figure3ab}(b). Hence even if $u$ and $v$ are forced, both $x$ and $y$ cannot be forced. Hence at least one edge of $D$ should be present in the forcing set. There are $2^r$ binding diamonds in $BF(r)$, $r\geq3$. Hence $\zeta_{e}(BF(r)) \geq 2^r$.
\end{proof}

Determining $\zeta_{e}(BF(r))$, $r \geq 3$ has proved to be very challenging. In this section, $\zeta_{e}(BF(r))$ for $r=3,4$ and 5 have been determined. Using the recursive nature of $BF(r)$, an upper bound has been derived for all $r \geq 6$. It is also conjectured that this upper bound is the exact value of  $\zeta_{e}(BF(r)), r\geq 6$.

\medskip
We begin with an algorithm to determine $\zeta_{e}(BF(3))$.

\begin{flushleft} \textbf {\textit{Edge-forcing Algorithm $BF(3)$}}:\end{flushleft}
\textbf{Input}: $BF(3)$, the butterfly network of dimension 3\\
\textbf{Algorithm}:\\
Step 1: Choose the edges $([2i-1,0],[2i-1,1])$, $1 \leq i \leq 4$;\\
Step 2: Choose the edges ([2,2], [6,3]), ([3,2], [7,3]), ([0,3], [4,2]), ([1,3], [5,2]) from Levels 2 and 3.\\
\textbf{Output}: An edge-forcing set having cardinality 8

\begin{figure}[!b]
\vspace*{-1mm}
\begin{center}
\includegraphics[scale=0.7]{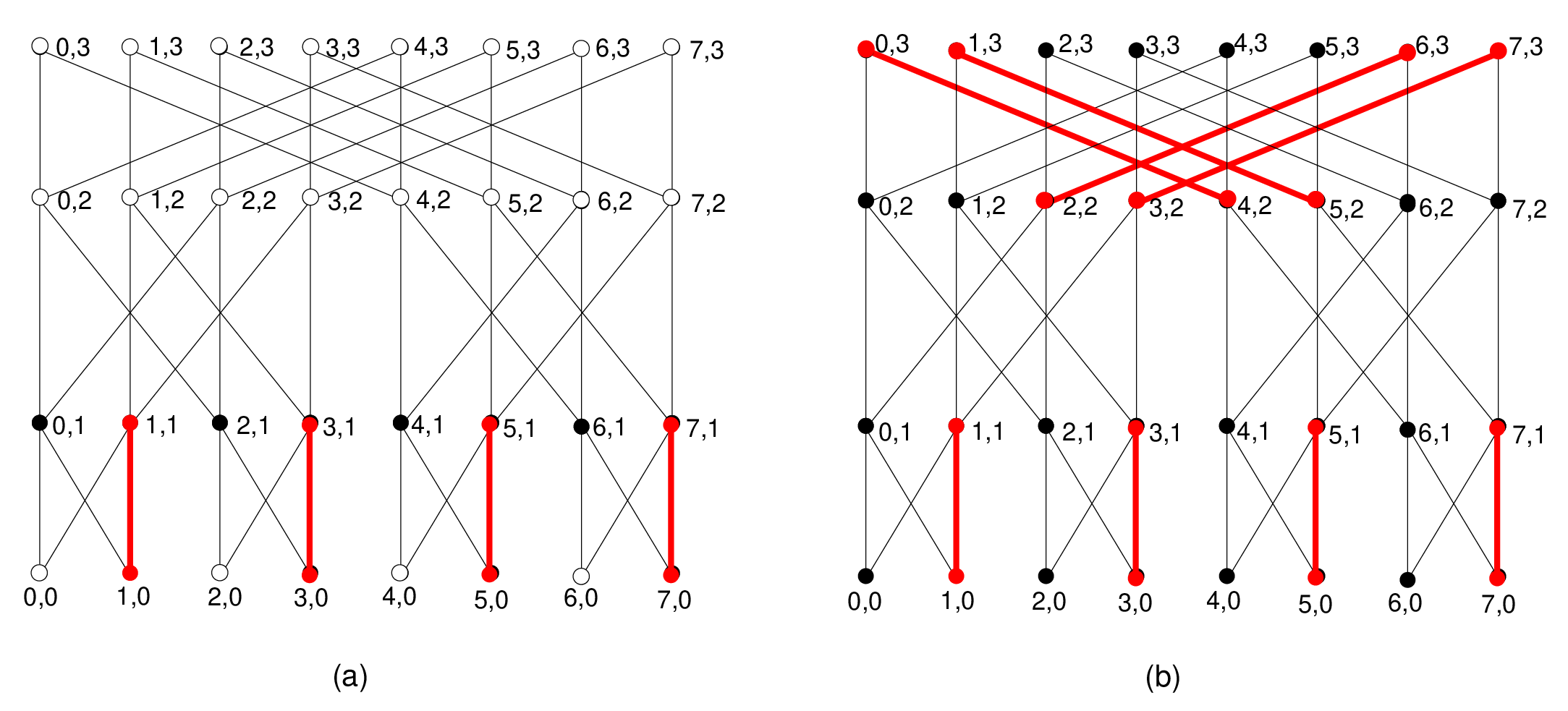}
\end{center}\vspace*{-3mm}
%\end{figure}
%\begin{figure}[h!]
\includegraphics[scale=0.69]{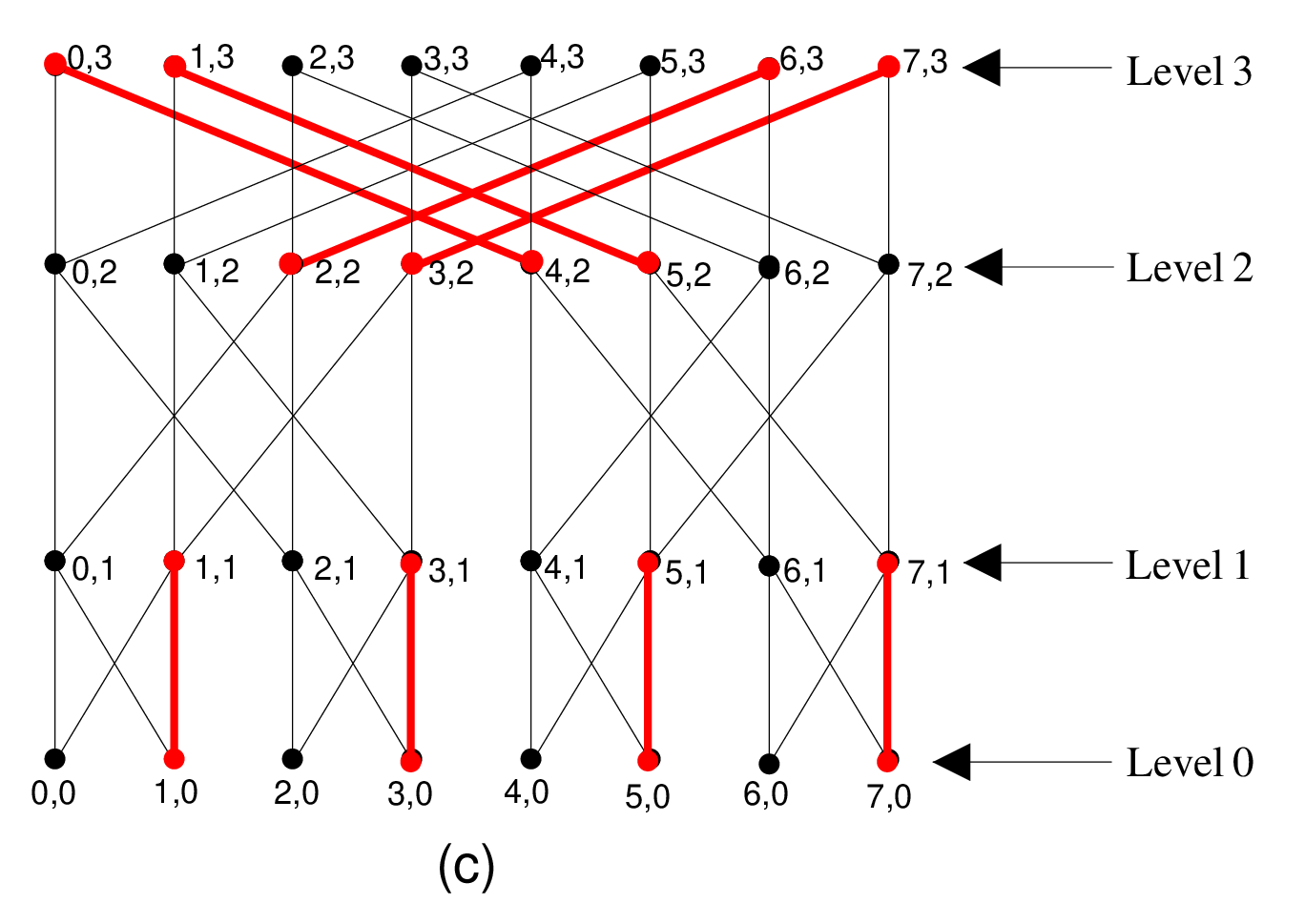}
\centering\vspace*{-5mm}
\caption{(a) $BF(3)$ Algorithm-Step 1, (b) $BF(3)$ Algorithm-Step 2, (c) Edge-forcing of $BF(3)$}
\label{figure5abc}
\end{figure}

\begin{flushleft} \textbf {\textit{Proof of Correctness}}:\end{flushleft}
The selected edges between Level 0 and Level 1 force all vertices of Level 1. See Figure~\ref{figure5abc}(a). The edges chosen between Levels 2 and 3 force all vertices of Level 2. See Figure~\ref{figure5abc}(b).  Thus in the first step, all vertices of Level 1 are forced. In the second step of the forcing process, the left out vertices of $BF(3)$ are forced. Figure~\ref{figure5abc}(c) illustrates the edge-forcing of $BF(3)$ and also the different Levels in a butterfly network.

By edge-forcing algorithm $BF(3)$, $\zeta_{e}(BF(3)) \leq 8$. Combining this with Lemma 4.2, we have the following result.

\begin{theorem}
The edge-forcing number of $BF(3)$ is $8$, that is $\zeta_{e}(BF(3)) = 8$.
\end{theorem}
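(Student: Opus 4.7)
The plan is to combine the lower bound from Lemma 4.2 with the explicit construction provided by the Edge-forcing Algorithm $BF(3)$ just stated. Lemma 4.2 gives $\zeta_{e}(BF(3)) \geq 2^{3} = 8$ at once, so the task reduces to exhibiting an edge-forcing set of size exactly $8$. For this I would take the set $K$ output by the algorithm: four straight edges between Levels 0 and 1 at the odd-indexed columns, together with four cross edges between Levels 2 and 3.

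First I would verify that $K$ is an independent edge set. The Step 1 edges lie between Levels 0 and 1 while the Step 2 edges lie between Levels 2 and 3, so the two groups are automatically vertex-disjoint; inside each group the endpoint labels are manifestly distinct. Hence $|K|=8$ and its sixteen endpoints form the initial black set $T$ legitimately prescribed by Definition 1.2.

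Then I would trace the color change rule in three waves. In wave one, each Level 0 black vertex has degree $2$, with one neighbor (its straight Level 1 partner) already black, so it forces its remaining Level 1 neighbor; this blackens all of Level 1. In wave two, each Level 1 vertex $[x,1]$ has two Level 0 neighbors (at least one now black from wave one) and two Level 2 neighbors; the Step 2 cross edges have been chosen precisely so that exactly one of the two Level 2 neighbors of $[x,1]$ already lies in $T$, leaving a unique white neighbor to be forced. This finishes both Level 0 and Level 2 simultaneously. In wave three, each Level 2 vertex is now black and has exactly one remaining white Level 3 neighbor, which it forces, completing the entire graph.

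The main obstacle is the case-check in wave two: for every $x \in \{0,1,\ldots,7\}$ one must confirm that precisely one of the Level 2 neighbors $[x,2]$ and $[x \oplus 2, 2]$ of $[x,1]$ belongs to the Step 2 endpoint set $\{[2,2],[3,2],[4,2],[5,2]\}$. This matching is the design principle behind the particular cross edges in Step 2, and once it is verified the upper bound $\zeta_{e}(BF(3)) \leq 8$ drops out, matching the lower bound and yielding $\zeta_{e}(BF(3)) = 8$.
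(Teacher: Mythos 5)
Your overall strategy is exactly the paper's: cite Lemma 4.2 for $\zeta_{e}(BF(3))\geq 2^{3}=8$, then verify that the eight edges output by the algorithm form an edge-forcing set. The edge set you take is the right one, and your independence check and your wave one (Level 0 black vertices forcing all of Level 1) are fine. However, your wave two does not follow the color change rule. After wave one, a Level 1 vertex $[x,1]$ has exactly one white neighbor in Level 0 (the even-labelled one of $[x,0]$ and $[x\oplus 1,0]$, which wave one did not touch) \emph{and}, as you yourself verify, exactly one white neighbor in Level 2. That is two white neighbors, so $[x,1]$ cannot force anything at that stage; the claim that wave two ``finishes both Level 0 and Level 2 simultaneously'' via the Level 1 vertices is false as stated.

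The missing ingredient is the role of the Level 3 endpoints of the Step 2 edges, which you never use. The vertices $[0,3],[1,3],[6,3],[7,3]$ have degree 2, each with one Level 2 neighbor already black (its Step 2 partner) and one white, so they force $[0,2],[1,2],[6,2],[7,2]$ immediately --- this is how the paper gets all of Level 2 black (in parallel with wave one). Only after Level 2 is entirely black does each $[x,1]$ acquire a unique white neighbor (in Level 0) and force it; simultaneously each Level 2 vertex forces its remaining white Level 3 neighbor. With that correction the propagation terminates at $V$, the upper bound $\zeta_{e}(BF(3))\leq 8$ holds, and combined with Lemma 4.2 the theorem follows as in the paper.
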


\medskip
We next compute the edge-forcing number of $BF(4)$.
\begin{lemma}
The edge-forcing number of $BF(4)$ is at least $25$, that is $\zeta_{e}(BF(4))\geq 25$.
\end{lemma}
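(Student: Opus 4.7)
The plan is to strengthen the binding-diamond bound from Lemma 4.2, which only delivers $\zeta_{e}(BF(4)) \geq 16$, by showing that the interior of $BF(4)$ forces $9$ additional edges. First, I would normalize any candidate edge-forcing set $K$: by an argument in the spirit of Lemma 2.1, we may assume $K$ contains exactly one edge in each of the $16$ binding diamonds ($8$ vertical at the Level $0$--$1$ interface and $8$ horizontal at the Level $3$--$4$ interface), so exactly $16$ edges of $K$ lie on the boundary and the rest (if any) must lie in the interior Levels $1$--$3$.

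Next, I would trace the forcing process starting from these $16$ boundary edges alone. A vertical binding diamond contributes two black Level $0$ vertices and two black Level $1$ vertices. A Level $1$ vertex $u$ has degree $4$, with two neighbors at Level $0$ (both inside its diamond) and two at Level $2$; hence $u$ can propagate only if both its Level $0$ neighbors, plus one of its Level $2$ neighbors, are already black, a condition that fails in most configurations. A symmetric analysis applies at the Level $3$--$4$ interface. Consequently, only limited propagation reaches Level $2$ from the boundary, and the middle of the graph becomes the site of forcing obstructions.

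Third, I would identify a family of \emph{forts} in the interior --- subsets of internal vertices that cannot become black using only boundary-derived propagation. Natural candidates are the $4$-cycles at Levels $1$--$2$ and $2$--$3$, together with small combinations thereof. By either a disjoint-fort argument or a counting bound on how many forts a single independent edge in the interior can simultaneously hit, the goal is to show that at least $9$ additional edges of $K$ are required, yielding $\zeta_{e}(BF(4)) \geq 16 + 9 = 25$.

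The main obstacle will be the third step: precisely identifying $9$ independent interior obstructions in a way that is robust to the choice of the $16$ boundary edges. A case analysis on the binding-edge configuration (straight edge versus cross edge in each diamond) is likely necessary, because the set of unforced interior vertices shifts with the boundary choices. Finding a clean fort-based or vertex-cut argument that bypasses heavy enumeration would be ideal, but may require a new structural observation about the middle levels of $BF(4)$ that does not immediately extend from the proof of Lemma 4.2.
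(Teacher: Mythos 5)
Your proposal correctly reproduces the skeleton of the paper's argument --- the $16$ binding-diamond edges from Lemma 4.2 as a baseline, plus a claim that $9$ further edges are unavoidable --- but the decisive step, the ``$+9$,'' is not actually proved. You yourself flag it as the main obstacle and offer only candidate strategies (forts, disjoint-fort counting, case analysis on straight versus cross binding edges) without executing any of them. As it stands the proposal establishes only $\zeta_e(BF(4)) \geq 16$, which is weaker than the target, so there is a genuine gap.

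The paper closes exactly this gap with a concrete counting mechanism that your sketch does not reach. After the $16$ binding edges are placed, all of Levels $1$ and $3$ become forced but $8$ vertices of Level $0$ remain white; a Level-$0$ vertex can only be forced by a Level-$1$ neighbor, which requires both Level-$2$ neighbors of that Level-$1$ vertex to be black. The paper then partitions the parallel edges between Levels $1$ and $2$ into four sets $S_0,\dots,S_3$ of four edges each, observes that a single edge from an $S_i$ forces none of the $8$ white Level-$0$ vertices while a pair of edges in $S_i$ forces only $2$ of them, and concludes that $8$ such edges are necessary; one further edge is then needed because vertices of Level $2$ are still unforced, giving $16+8+1=25$. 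If you want to complete your proposal, your ``forts'' should be instantiated as precisely these local obstructions around the $S_i$; the vague appeal to a family of $4$-cycles at Levels $1$--$2$ and $2$--$3$ does not by itself yield the count of $9$, and the symmetric analysis you propose at the Level $3$--$4$ interface is not where the extra edges come from in the paper's accounting.
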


\begin{proof}There are 16 binding diamonds in $BF(4)$. 16 edges are chosen, one edge from every binding diamond as in the Lemma 4.2. This leaves all vertices in Levels 1 and 3 forced. Level 0 has 8 unforced vertices. The parallel edges connecting vertices of Level 1 and Level 2 are partitioned into 4 subsets $S_{0},S_{1},S_{2}$ and $S_{3}$ of 4 edges each, where end vertices of edges in $S_{i}$ are labelled  $([4i,1],[4i,2])$, $([4i+1,1],[4i+1,2])$, $([4i+2,1],[4i+2,2])$, $([4i+3,1],[4i+3,2])$, $i=0,1,2,3$. An edge chosen with one end in Level 1 and another end in Level 2 in any $S_{i}$, does not force any of the 8 unforced vertices in Level 0. But a pair of edges in $S_{i}$, $i=0,1,2,3$ forces 2 vertices of the corresponding vertices at Level 0. Thus to force vertices at Level 0, 8 edges are necessary in the edge-forcing set. There are vertices in Level 2 which are yet to be forced, hence at least one more edge has to be chosen in the edge-forcing set. Therefore  $\zeta_{e}(BF(4))\geq 25$.
\end{proof}

\begin{flushleft} \textbf {\textit{Edge-forcing Algorithm $BF(4)$}}:\end{flushleft}
\textbf{Input}: $BF(4)$, the butterfly network of dimension 4\\
\textbf{Algorithm}:\\
Step 1: Choose the edges $([2i-1,0],[2i-1,1]), 1 \leq i \leq 8$ and the edges $([i,4],[8i+1,3]), 0 \leq i \leq 8$, $([i,3],[8+i,4]), 4 \leq i \leq 7$;\\
Step 2: Choose the edges $([4i,1],[4i+4,2])$ and $([4i,2],[4i+2,1]), i=1,2$; \\
Step 3: Choose the edges $([i,2],[i,3]), i=0,1$ and edges $([i,3],[i,2]), i=14,15$; \\
Step 4: Select edge ([3,3], [7,2]). \\
\textbf{Output}: An edge-forcing set having cardinality 25

\begin{figure}[h!]
\includegraphics[scale=0.372]{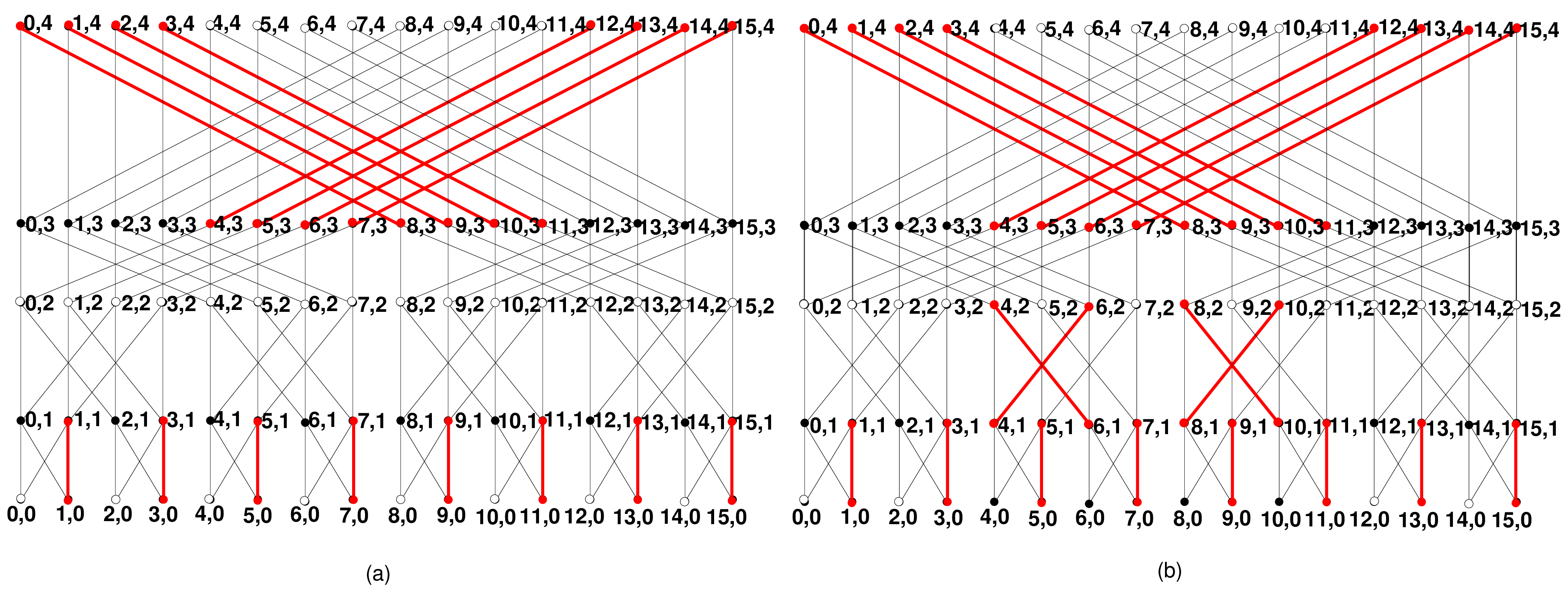}
\end{figure}

\begin{figure}[h!]
\includegraphics[scale=0.374]{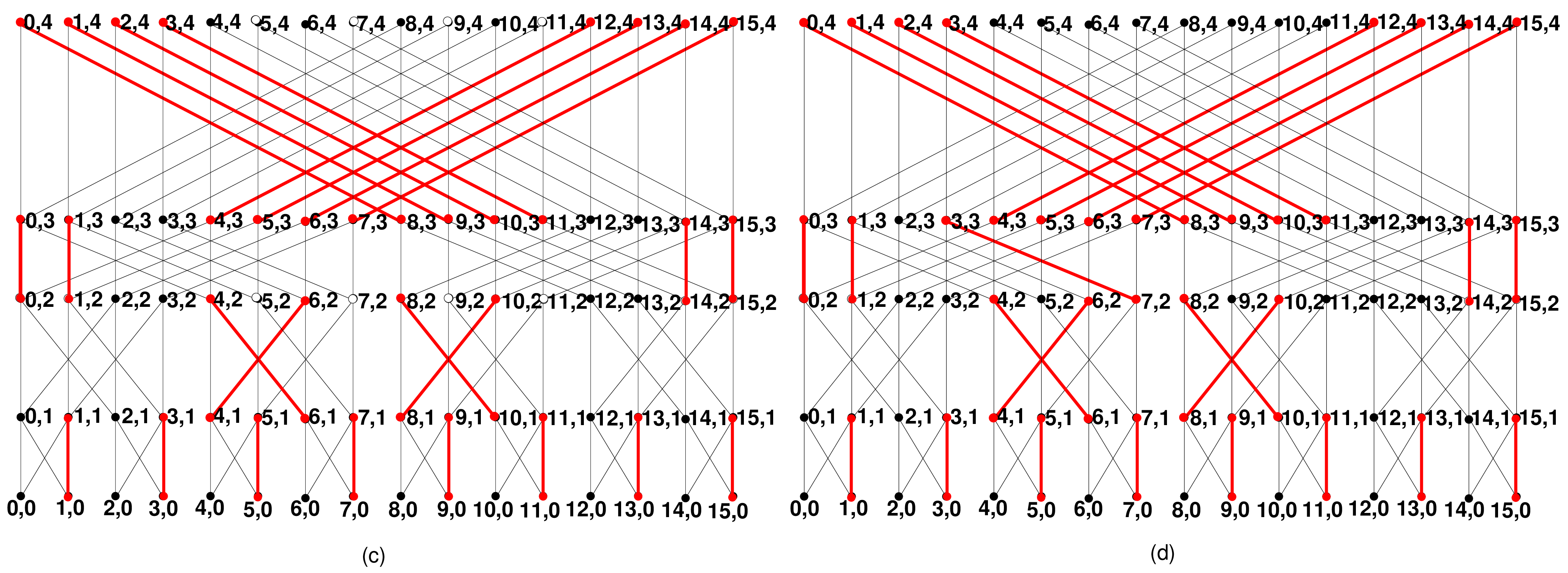}
\centering
\caption{(a) $BF(4)$ Algorithm-Step 1, (b) $BF(4)$ Algorithm-Step 2, (c) $BF(4)$ Algorithm-Step 3, (d) $BF(4)$ Algorithm-Step 4 and Edge-forcing of $BF(4)$ }
\label{figure6abcd}
\end{figure}

\begin{flushleft} \textbf {\textit{Proof of Correctness}}:\end{flushleft}
In Step 1, we have chosen 16 binding edges of $BF(4)$, one from each binding diamond. The edges selected in Step 1 force all vertices in Levels 1 and 3. See Figure~\ref{figure6abcd}(a). The edges selected using Step 2 and 3 force the remaining 8 vertices in Level 0. See Figure~\ref{figure6abcd}(b) and Figure~\ref{figure6abcd}(c). Any edge between the end vertices in Levels 2 and 3 will force all the remaining unforced vertices in Levels 2 and 4. See Figure~\ref{figure6abcd}(d).

\medskip
By edge-forcing algorithm $BF(4)$, $\zeta_{e}(BF(4)) \leq 25$. Combining this with Lemma 4.4, we have the following result.

\begin{theorem} The edge-forcing number of $BF(4)$ is $25$, that is $\zeta_{e}(BF(4)) = 25$.
\end{theorem}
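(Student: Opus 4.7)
The plan is to combine Lemma 4.4, which already establishes $\zeta_e(BF(4)) \geq 25$, with an explicit construction achieving this value, namely the edge-forcing set produced by the Edge-forcing Algorithm $BF(4)$. Thus the theorem reduces to verifying two things about that algorithm: (i) the 25 output edges form an independent set in $BF(4)$, and (ii) the set of all endpoints of these edges is an edge-forcing set, i.e.\ its closure under the color-change rule is all of $V(BF(4))$.

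For independence, I would tabulate the endpoints of the edges chosen in each step. Step 1 selects 16 binding edges, one from each of the $2^4=16$ binding diamonds (8 vertical binding edges between Levels 0 and 1, and 8 horizontal binding edges between Levels 3 and 4); by construction these 16 edges touch pairwise distinct vertices. Steps 2 and 3 add four edges between Levels 1--2 and four edges between Levels 2--3, all with distinct labels and disjoint from the Step 1 vertices. Step 4 adds the single edge $([3,3],[7,2])$, whose endpoints are not yet used. A short label check confirms the $16+4+4+1=25$ edges are pairwise disjoint.

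Next, I would verify the forcing propagation in the temporal order suggested by the algorithm and Figure~\ref{figure6abcd}. After Step 1, each vertex at Level 1 is the unique white neighbor of the Level-0 endpoint of a chosen vertical binding edge, so it is forced; symmetrically, every vertex at Level 3 is forced through the horizontal binding edges. The unforced vertices are now exactly those at Levels 0, 2 and 4. Using the partition $S_0,S_1,S_2,S_3$ from the proof of Lemma 4.4, the edges chosen in Steps 2 and 3 pair up so that, together with the already-black Level 1, each pair forces the two Level-0 vertices attached to its slab; this disposes of all 8 unforced Level-0 vertices. Finally, the Step 4 edge $([3,3],[7,2])$ supplies the single extra edge required by Lemma 4.4 to break the remaining symmetric obstruction at Levels 2 and 4, and a straightforward propagation through the diamond structure then colors every remaining vertex.

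The main obstacle is the careful bookkeeping in Steps 2--4: at each tick of the color-change rule one must exhibit a black vertex with a unique white neighbor, and confirm that no white vertex is stranded. The recursive diamond structure of $BF(4)$ makes this routine but tedious, and I would organize the verification level by level, exactly mirroring the four sub-figures of Figure~\ref{figure6abcd}. Once this is done, the algorithm certifies $\zeta_e(BF(4)) \leq 25$, and combined with Lemma 4.4 we conclude $\zeta_e(BF(4))=25$.
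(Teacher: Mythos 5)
Your proposal is correct and follows essentially the same route as the paper: the lower bound is taken from Lemma 4.4, the upper bound comes from the explicit 25-edge set produced by the Edge-forcing Algorithm $BF(4)$ (16 binding edges, then 4+4+1 edges in the middle levels), and the two are combined. Your level-by-level verification of independence and of the color-change propagation matches the paper's proof of correctness, which argues in the same order and at a similar level of detail.
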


\medskip
We now consider $BF(r)$, where $r=5$.

\begin{lemma} The edge-forcing number of $BF(5)$ is at least $47$, that is $\zeta_{e}(BF(5))\geq 47$.
\end{lemma}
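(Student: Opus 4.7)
\medskip\noindent\textbf{Proof plan.} I would follow the blueprint of Lemma~4.4, scaled up to dimension five. First, $BF(5)$ contains $32$ binding diamonds ($16$ vertical and $16$ horizontal), and the argument of Lemma~4.2 forces at least one edge of the edge-forcing set to lie in each of them, contributing $32$ mandatory edges at the outset.

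Next, examine the propagation starting from any such choice of one edge per binding diamond. Exactly as in the four-dimensional case, the degree-$2$ binding vertices at Level~$0$ and Level~$5$ push the propagation across each binding diamond, so that all of Levels~$1$ and $4$ become black. However, $16$ vertices in Level~$0$, $16$ in Level~$5$, and every vertex in Levels~$2$ and $3$ remain white. Propagation then halts, because each black Level~$1$ (respectively Level~$4$) vertex still has three white neighbours -- one in Level~$0$ (resp.\ Level~$5$) together with both of its Level~$2$ (resp.\ Level~$3$) neighbours.

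The remainder of the argument lower-bounds the extra edges required. For each unforced Level~$0$ vertex, a Level~$1$ neighbour can force it only once both of its Level~$2$ neighbours -- the two Level~$2$ vertices of the corresponding Level~$1$--Level~$2$ diamond -- have become black. I would partition the parallel Level~$1$--Level~$2$ edges into subsets in the style of $S_0,\dots,S_3$ in Lemma~4.4 (now indexed by the first three bits of the labels), show that no single edge in a subset forces any Level~$0$ white, and that an appropriate pair of edges within a subset forces two. Counting the pairs needed to cover all $16$ unforced Level~$0$ vertices gives an additional contribution; the top--bottom symmetry of $BF(5)$ yields the same bound for Level~$5$ via the analogous Level~$3$--Level~$4$ partition. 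A final step shows that some interior vertices of Levels~$2$ and $3$ remain unforceable after all these edges are placed, accounting for one more edge. Summing the contributions gives $32 + 15 = 47$.

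The principal obstacle is the bookkeeping under the independence constraint: each additional edge must be vertex-disjoint from the $32$ binding edges and from one another. Adapting the $S_i$-style partition to the wider bit structure of $BF(5)$, verifying in detail that only \emph{pairs} -- not singletons -- of edges in a subset can contribute to forcing Level~$0$ (resp.\ Level~$5$) vertices, and checking that the Level~$0$, Level~$5$, and interior contributions are simultaneously tight, so that the $15$ extras cannot be shaved down to $14$, is the technical heart of the proof.
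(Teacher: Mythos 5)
Your skeleton matches the paper's: the $32$ binding diamonds each force one edge into the set (Lemma~4.2), those $32$ edges propagate to all of Levels~$1$ and $4$ and then stall, and the remaining cost is accounted for via an $S_i$-style partition of the parallel Level~$1$--Level~$2$ edges. Your description of the stalled configuration (each black Level~$1$ vertex retaining three white neighbours, with $16$ white vertices left in each of Levels~$0$ and $5$) is in fact more explicit than the paper's.

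The gap is in the final count, which you flag as the technical heart but then set up incorrectly. You propose to charge pairs of Level~$1$--Level~$2$ edges for the $16$ white Level~$0$ vertices \emph{and}, ``by the top--bottom symmetry,'' pairs of Level~$3$--Level~$4$ edges for the $16$ white Level~$5$ vertices, plus one further interior edge. If each of the $8$ subsets on each side genuinely required a pair, this scheme would yield $32+16+16+1=65$, exceeding the upper bound of $47$ realised by the paper's Edge-forcing Algorithm $BF(5)$ (Theorem~4.7) -- so the symmetric double charge cannot be the right accounting. The paper's count is deliberately one-sided: it charges only Level~$1$--Level~$2$ edges, taking pairs from $S_0,\dots,S_6$ for an additional $12$ edges and then arguing that only $3$ edges are needed from the last subset precisely because all higher levels are already forced by that stage, the Level~$5$ and interior vertices being reached by propagation rather than by separately purchased edges. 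Without this asymmetry -- and without an argument for why exactly one subset can be shaved to fewer than a full pair's worth while the others cannot -- your itemised contributions do not sum to the asserted $15$, and the bound $\zeta_{e}(BF(5))\geq 47$ does not follow from your outline.
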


\begin{figure}[h!]
\vspace{2mm}
\includegraphics[scale=0.376]{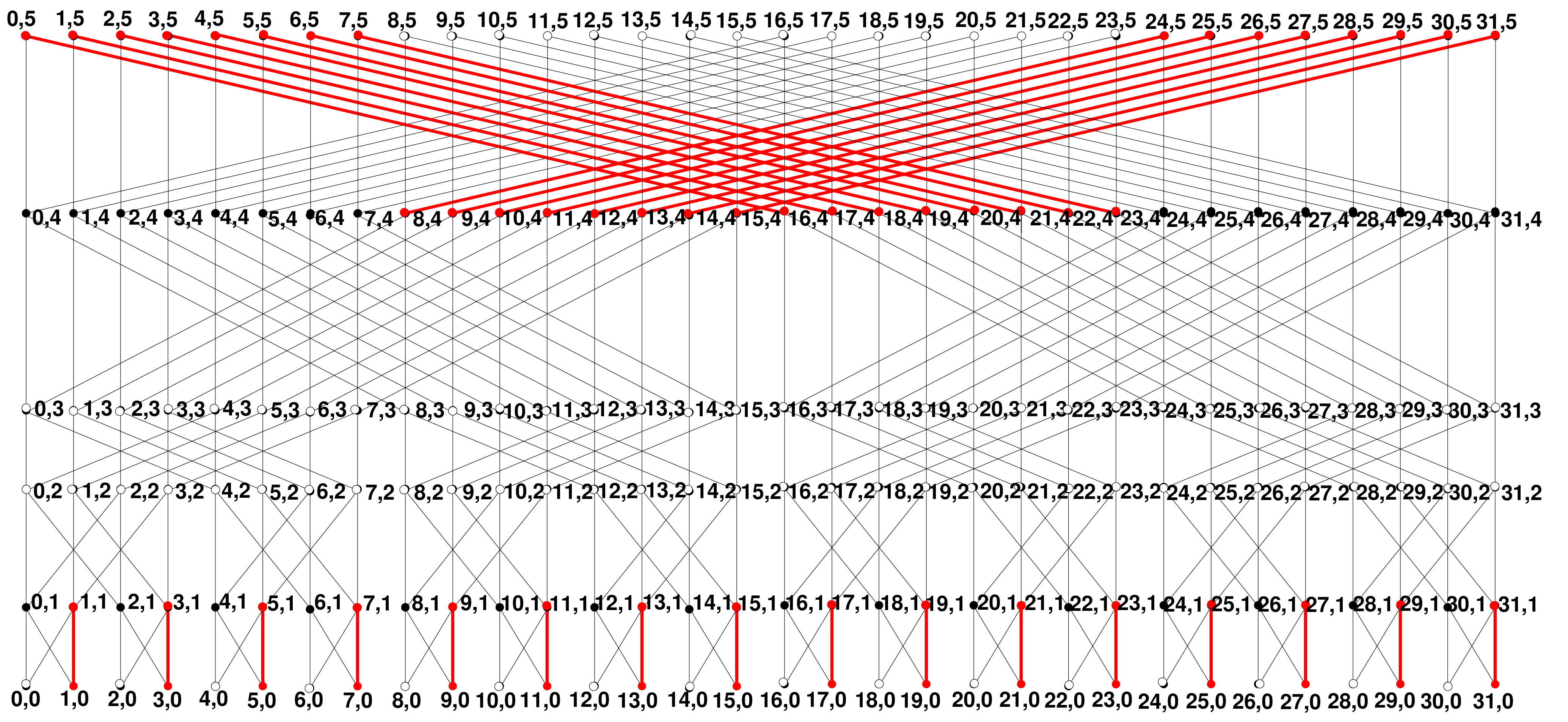}
\centering
\caption{$BF(5)$ Algorithm-Step 1}
\label{figure7}
\end{figure}

\begin{proof} There are 32 binding diamonds in $BF(5)$. 32 edges are chosen, one edge from every binding diamond as in Lemma 4.2. This leaves all vertices in Level 1 and Level 4 forced. Level 0 has 16 unforced vertices. The parallel edges connecting vertices of Level 1 and Level 2 are partitioned into 8 subsets $S_{0},S_{1},S_{2}$ and $S_{3}$,\ldots,$S_{7}$ of 4 edges each, where end vertices of edges in $S_{i}$ are labelled $([4i,1],[4i,2])$, $([4i+1,1],[4i+1,2])$, $([4i+2,1],[4i+2,2])$, $([4i+3,1],[4i+3,2])$, $i=0,1,2,3,\ldots,7$. Choosing an edge with one end in Level 1 and another end in Level 2 in any $S_{i}$, does not force any of the 8 unforced vertices in Level 0. But pairs of edges in $S_{i}$, $i=0,1,2,3,\ldots,6$ force 2 vertices of the corresponding vertices at Level 0. This accounts for an additional 12 edges in the edge-forcing set. Since all the vertices in the higher levels are already forced, only 3 edges are necessary from $S_{7}$ to force the remaining vertices. Therefore $\zeta_{e}(BF(5))\geq 47$.
\end{proof}

\begin{flushleft} \textbf {\textit{Edge-forcing Algorithm $BF(5)$}}:\end{flushleft}
\textbf{Input}: $BF(5)$, the butterfly network of dimension 5 \\
\textbf{Algorithm}:\\
Step 1: Choose the edges $([2i-1,0],[2i-1])$, $1\leq i \leq 16$ and edges $([i,5],[i+16,4])$ , $0 \leq i \leq 7$;  \\
Step 2: Choose the edges $([i+1,5],[i+1,3])$, $0 \leq i \leq 18$, $15 \leq i \leq 18$, $23 \leq i \leq 25$. \\
\textbf{Output}: An edge-forcing set having cardinality 47

\smallskip
\begin{flushleft} \textbf {\textit{Proof of Correctness}}:\end{flushleft}
In Step 1, we have chosen 32 binding edges of $BF(5)$, one from each of the binding diamonds. The edges selected using Step 1 force all the vertices in Level 1 and Level 4. See Figure~\ref{figure7}. The 15 edges chosen in Levels 2 and 3 by Step 2 are sufficient to force all vertices in Levels 2 and 3 and in turn all vertices in Levels 0 and 5. See Figure~\ref{figure8}.

\begin{figure}[h!]
\includegraphics[scale=0.376]{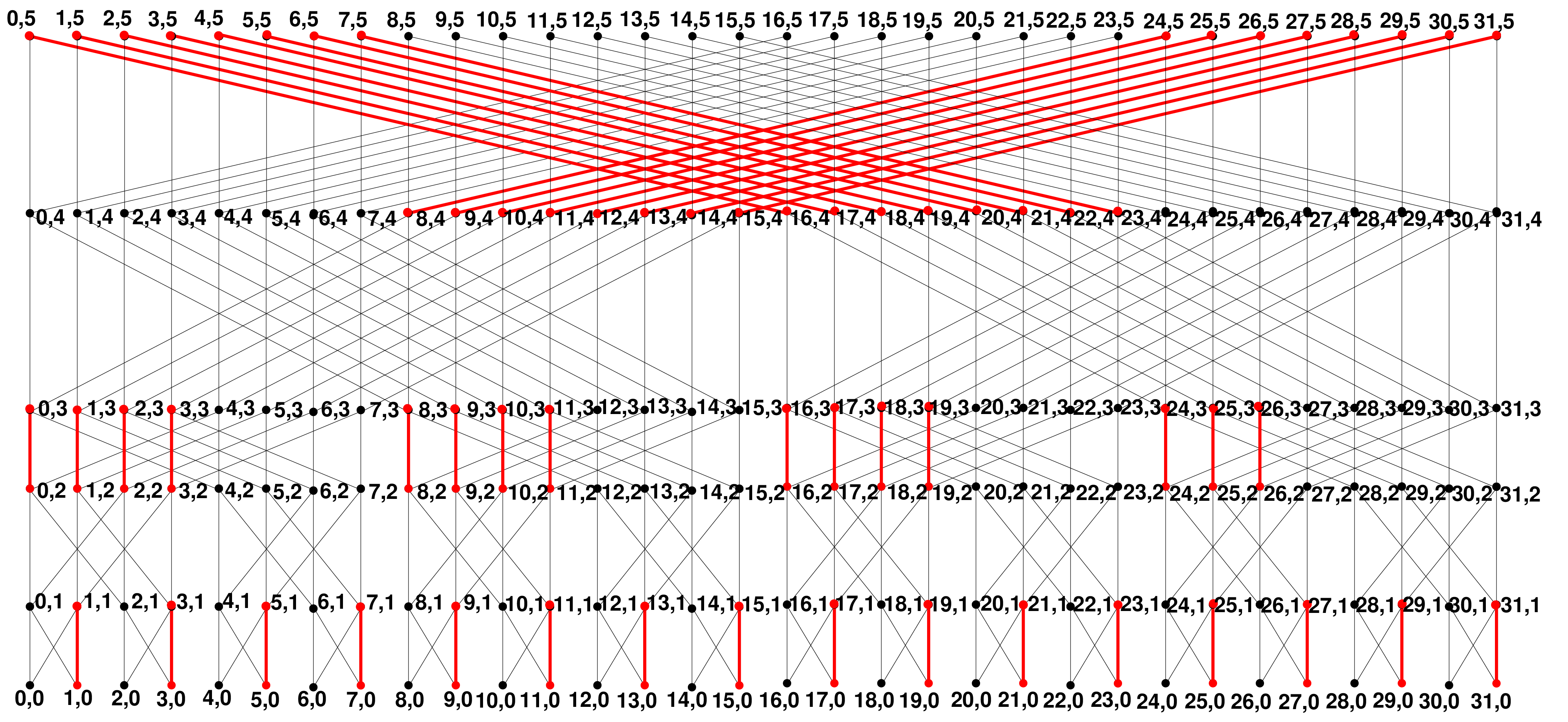}
\centering
\caption{$BF(5)$ Algorithm-Step 2 and Edge-forcing of $BF(5)$}
\label{figure8}
\end{figure}

\medskip
By edge-forcing algorithm $BF(5)$, $\zeta_{e}(BF(5)) \leq 47$. Combining this with Lemma 4.6, we have the following result.

\begin{theorem}
The edge-forcing number of $BF(5)$ is $47$, that is $\zeta_{e}(BF(5)) = 47$.
\end{theorem}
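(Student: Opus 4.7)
The plan is to establish matching bounds. The lower bound $\zeta_e(BF(5)) \geq 47$ is already furnished by Lemma 4.6, so what remains is to produce an edge-forcing set of cardinality exactly $47$; the Edge-forcing Algorithm $BF(5)$ is precisely such a construction. The theorem will then follow by combining the two inequalities.

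To certify the upper bound I would check three things about the algorithm's output $K$. First, a cardinality count: Step 1 selects $16$ binding edges between Levels $0$ and $1$ together with $16$ binding edges between Levels $4$ and $5$ — one per binding diamond, of which $BF(5)$ has exactly $32$ — and Step 2 selects $15$ additional edges between Levels $2$ and $3$, for a total of $32+15=47$. Second, I would verify that $K$ is an independent set of edges: Step 1 and Step 2 edges occupy disjoint level-pairs, so one only has to see that within each step the chosen edges are vertex-disjoint, which is immediate from the indexing. Third, and most substantively, I would argue that the closure of the endpoint set of $K$ equals $V(BF(5))$, so that $K$ is indeed an edge-forcing set in the sense of Definition 1.2.

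For the third item, the forcing proceeds in two waves, as indicated in Figures~\ref{figure7} and~\ref{figure8}: the Step 1 edges force every vertex in Levels $1$ and $4$ (each unchosen binding edge has its degree-$2$ endpoint adjacent to a black vertex with a unique white neighbor), and the Step 2 edges then drive propagation through Levels $2$ and $3$, whose black vertices in turn force the remaining vertices in Levels $0$ and $5$ by cascading along the binding diamonds. The principal obstacle is confirming that the forcing schedule never stalls, that is, that at every step some black vertex has exactly one white neighbor until all of $V(BF(5))$ is exhausted; this requires walking through the packets $S_0,\ldots,S_7$ of parallel edges used in the lower-bound proof in the correct order, but no new ideas beyond those of Lemmas 4.2 and 4.6 are required. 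Combining the resulting upper bound $\zeta_e(BF(5))\leq 47$ with Lemma 4.6 then yields $\zeta_e(BF(5))=47$.
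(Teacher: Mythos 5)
Your proposal follows exactly the paper's route: the lower bound is quoted from Lemma 4.6, and the upper bound comes from verifying that the Edge-forcing Algorithm $BF(5)$ outputs an independent set of $32+15=47$ edges whose closure is all of $V(BF(5))$, with Step 1 forcing Levels 1 and 4 and Step 2 driving the propagation through Levels 2 and 3 and thence to Levels 0 and 5. This matches the paper's proof of correctness and its derivation of Theorem 4.7, so no further comparison is needed.
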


Now, we will prove the upper bound of the edge-forcing number of $BF(r)$, $r \geq 3$.

\begin{lemma}
For $r$ odd, $r \geq 3$, $\zeta_{e}(BF(r)) \leq \left\lceil \frac{r}{2} \right\rceil  2^{r-1}$.
\end{lemma}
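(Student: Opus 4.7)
The strategy is constructive: I will exhibit an edge-forcing set of $BF(r)$ of size exactly $\lceil r/2 \rceil \cdot 2^{r-1}$. Since $r$ is odd, the $r+1$ levels of $BF(r)$ partition evenly into $(r+1)/2 = \lceil r/2 \rceil$ consecutive pairs $L_k = \{2k, 2k+1\}$, $0 \leq k \leq (r-1)/2$, and the plan is to place exactly $2^{r-1}$ independent edges on each such pair, yielding a total of $\lceil r/2 \rceil \cdot 2^{r-1}$ edges whose endpoints jointly force the whole graph.

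On the outer pairs $L_0$ and $L_{(r-1)/2}$, I would choose the $2^{r-1}$ binding-diamond edges used in Step~1 of the $BF(3)$ and $BF(5)$ algorithms: at $L_0$ take the straight edges $([2j-1, 0], [2j-1, 1])$ for $1 \leq j \leq 2^{r-1}$, and symmetrically at $L_{(r-1)/2}$ one edge per horizontal binding diamond. On each internal pair $L_k$ with $1 \leq k \leq (r-3)/2$, I would place $2^{r-1}$ independent edges mixing straight and cross edges in the style of Step~2 of the $BF(3)$ algorithm. Independence of the full set is automatic, since distinct pairs $L_k$ sit on disjoint levels and each pair's edges form a matching by construction. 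The forcing argument then proceeds in two waves emanating from the outermost pairs inward: the correctness proof of the $BF(3)$ algorithm generalizes to show that the $L_0$-edges force all of levels $0$ and $1$, and symmetrically $L_{(r-1)/2}$ forces levels $r-1$ and $r$; then, by induction on $k$, once every vertex on levels $\leq 2k-1$ is colored, the edges on $L_k$ combined with the color change rule blacken all of level $2k$ and then level $2k+1$, with a symmetric wave moving up from the bottom until the two waves meet in the middle.

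The principal obstacle is specifying the edges on each internal pair $L_k$ so that the inward wave never stalls. Every forcing step needs a colored vertex with a unique uncolored neighbor, so the choice must be compatible with the bit-flip rule of the cross edges at that level --- which for $L_k$ with $k \geq 1$ flips bit $2k+1$ and hence preserves the parity of the least significant label bit. I would mimic the mixed straight/cross selection of Step~2 in the $BF(5)$ algorithm, choosing the $2^{r-1}$ edges on $L_k$ via a suitable partition of the $r$-bit labels so that, once the wave reaches level $2k-1$, exactly one neighbor of each colored endpoint in $L_k$ remains uncolored. A parity-based induction on $k$ then completes the verification, and the total count $\lceil r/2 \rceil \cdot 2^{r-1}$ follows by summation over the $(r+1)/2$ pairs.
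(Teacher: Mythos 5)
There is a genuine gap, and your route also differs from the paper's. The paper does not build an explicit edge-forcing set level by level: it uses the recursive structure of the butterfly, observing that $BF(r)$ contains four vertex-disjoint copies of $BF(r-2)$ on Levels $0$ through $r-2$, takes edge-forcing sets in each copy plus $2^{r-1}$ further edges between Levels $r-1$ and $r$, and unrolls the recurrence $\zeta_{e}(BF(r)) \leq 4\zeta_{e}(BF(r-2)) + 2^{r-1}$ down to the fully verified base case $\zeta_{e}(BF(3)) = 8$, which yields $(r+1)2^{r-2} = \left\lceil r/2 \right\rceil 2^{r-1}$. Your plan instead pairs the $r+1$ levels and places $2^{r-1}$ independent edges on each pair. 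The arithmetic of your count is right, but the entire content of the lemma lies in the step you defer: you never specify the edge sets on the internal pairs $L_k$, and you yourself identify the non-stalling of the inward wave as ``the principal obstacle'' without resolving it. A proof that says ``choose the edges via a suitable partition so that exactly one neighbor remains uncolored'' has not produced an edge-forcing set.

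Beyond the unspecified construction, the induction you sketch has its forcing order wrong, already for $BF(3)$. The $L_0$-edges do \emph{not} force all of Levels $0$ and $1$ on their own: they force Level $1$, but each colored Level-$1$ vertex then still has one uncolored Level-$0$ neighbor \emph{and} two uncolored Level-$2$ neighbors, so it cannot force anything until Level $2$ has been handled. In the paper's $BF(3)$ algorithm the leftover Level-$0$ vertices are colored only in the second phase, after the Level $2$--$3$ edges have done their work; the wave comes back outward, not inward. So the hypothesis of your induction (``every vertex on levels $\leq 2k-1$ is colored'') is never established for $k=1$ in the order you propose. Note also that your scheme would use $3 \cdot 2^{4} = 48$ edges on $BF(5)$, whereas the verified construction uses $47$ ($32$ binding edges plus $15$ interior edges, not $16$ per level pair), so the correctness proofs of the base-case algorithms do not ``generalize'' to your uniform $2^{r-1}$-per-pair layout without a new argument. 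The cleanest repair is the paper's: prove the recursion $\zeta_{e}(BF(r)) \leq 4\zeta_{e}(BF(r-2)) + 2^{r-1}$ and iterate from $BF(3)$.
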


\begin{proof}
The result is true for $r$ = 3 and 5 by Theorems 4.3 and 4.7 respectively. For $r \geq 7$, $r$ odd, $BF(r)$ contains 4 vertex disjoint copies of $BF(r-2)$ induced by vertices in Level 0 to Level $(r-2)$. The edges in the subgraph induced by vertices in Level $r$ and Level $(r-1)$ are not adjacent to any edge in the four isomorphic copies of $BF(r-2)$. Hence using recursion,
\begin{eqnarray*}
% \nonumber to remove numbering (before each equation
    \zeta_{e}(BF(r)) & \leq & 4 \zeta_{e}(BF(r-2)) + 2^{r-1} \\
    & \leq &  4 (4 \zeta_{e}(BF(r-4)) + 2^{r-3}) + 2^{r-1} \\
    & = & 4^2 \zeta_{e}(BF(r-4)) + 2\times 2^{r-1} \\
    & \leq & 4^{\left(\dfrac {r-3} {2}\right)}\times \zeta_{e}(BF(r-(r-3))) + \left({\dfrac {r-3} {2}}\right) 2^{r-1} \\
    & = & 2^{r-3} \times 8 + (r-3)~ 2^{r-2} \\
    & = & (r+1)~ 2^{r-2} \\
    & = & \left\lceil \frac{r}{2} \right\rceil  2^{r-1}.
\end{eqnarray*}

Thus $\zeta_{e}(BF(r)) \leq \left\lceil \frac{r}{2} \right\rceil  2^{r-1}; ~r \geq 3$ and $r$ odd.
\end{proof}

\noindent Combining Lemma 4.2 with Lemma 4.8, we arrive at the following result.

\begin{theorem} For $r \geq 3$, $r$ odd, $2^r \leq \zeta_{e}(BF(r)) \leq  \left\lceil \frac{r}{2} \right\rceil  2^{r-1}$.
\end{theorem}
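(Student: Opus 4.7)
The theorem is a direct packaging of two results already established in the excerpt, so the plan is simply to cite and combine them, noting where each half comes from and checking that the ranges of $r$ match up.

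First, I would handle the lower bound. Lemma 4.2 asserts $\zeta_e(BF(r)) \geq 2^r$ for every $r \geq 3$, with no parity restriction, so in particular it applies when $r$ is odd. The proof there counted the $2^r$ vertical binding diamonds of $BF(r)$ and argued via the two degree-$2$ binding vertices $x,y$ in each diamond $D$: since $x$ and $y$ share the same two degree-$4$ neighbors $u,v$ inside $D$, forcing $u$ and $v$ cannot propagate to either $x$ or $y$ uniquely, so at least one edge per diamond must already be in the edge-forcing set. Nothing more needs to be said for this direction.

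Next, I would handle the upper bound by invoking Lemma 4.8, which gives $\zeta_e(BF(r)) \leq \lceil r/2 \rceil \, 2^{r-1}$ precisely for odd $r \geq 3$. Recall that the proof of that lemma used the explicit constructions verified in Theorems 4.3 and 4.7 as the base cases $r=3$ and $r=5$, and then applied the recursion $\zeta_e(BF(r)) \leq 4\,\zeta_e(BF(r-2)) + 2^{r-1}$, based on the fact that $BF(r)$ contains four vertex-disjoint copies of $BF(r-2)$ in Levels $0$ through $r-2$, together with the edges between Levels $r-1$ and $r$ contributing the extra $2^{r-1}$ term. Since Lemma 4.8 already covers exactly the parity and range we need, nothing new is required here either.

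Finally, I would combine the two displayed inequalities into the single chain $2^r \leq \zeta_e(BF(r)) \leq \lceil r/2 \rceil \, 2^{r-1}$ and close the proof. Because the statement is a straightforward corollary, there is no genuine obstacle: the only thing to be careful about is confirming that Lemma 4.2 is quoted with the correct hypothesis $r \geq 3$ (so that binding diamonds exist — recall Theorem 4.1 showed no edge-forcing set even exists for $BF(2)$) and that Lemma 4.8's odd-$r$ restriction matches the theorem's hypothesis. Once those trivial compatibility checks are recorded, the proof is complete in essentially one or two sentences.
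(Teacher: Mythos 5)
Your proposal is correct and matches the paper exactly: the paper proves this theorem by simply combining Lemma 4.2 (the lower bound $2^r$) with Lemma 4.8 (the upper bound $\left\lceil r/2 \right\rceil 2^{r-1}$ for odd $r \geq 3$), which is precisely what you do. The additional recap of how each lemma was proved is accurate but not needed for this corollary.
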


\begin{lemma}For $r \geq 4$, $r$ even, $\zeta_{e}(BF(r)) \leq \left(\dfrac {r} {2} +2\right) 2^{r-1}$.
\end{lemma}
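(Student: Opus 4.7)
The plan is to induct on even $r$, mirroring the recursive decomposition that underpins the odd-case bound of Lemma 4.8. The base case $r=4$ is immediate from Theorem 4.5, since $\zeta_e(BF(4)) = 25 \leq 32 = \left(\frac{4}{2}+2\right)2^{3}$.

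For the inductive step I would assume $r \geq 6$ is even and that the bound holds for $r-2$. As in Lemma 4.8, $BF(r)$ contains four vertex-disjoint copies of $BF(r-2)$, obtained by fixing the last two coordinates of the vertex labels and restricting to Levels $0$ through $r-2$; the subgraph induced by Levels $r-1$ and $r$ shares no vertices with these four copies. I would then take an optimal edge-forcing set in each of the four copies and adjoin a suitable independent set of $2^{r-1}$ edges in the top two levels, namely one straight edge from each of the $2^{r-1}$ horizontal binding diamonds. Granting that this union is an edge-forcing set of $BF(r)$, we obtain the recursion $\zeta_e(BF(r)) \leq 4\,\zeta_e(BF(r-2)) + 2^{r-1}$, and the inductive hypothesis closes the calculation:
\begin{align*}
\zeta_e(BF(r)) &\leq 4\left(\frac{r-2}{2}+2\right)2^{r-3} + 2^{r-1}\\
&= \left(\frac{r-2}{2}+2\right)2^{r-1} + 2^{r-1}\\
&= \left(\frac{r}{2}+2\right)2^{r-1}.
\end{align*}

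The main obstacle is justifying the recursion itself, since a Level-$(r-2)$ vertex of a copy acquires two extra neighbors at Level $r-1$ inside $BF(r)$ that are absent from the isolated $BF(r-2)$, and these extra white neighbors could in principle stall a forcing step that succeeds inside $BF(r-2)$. To rule this out I would verify that the $2^{r-1}$ chosen top edges color all of Level $r-1$ black at the very first time step: the Level-$r$ endpoint of each chosen straight edge has its other (Level-$(r-1)$) neighbor as its unique white neighbor, and so immediately forces it. After that, no subsequent forcing step inside a copy ever sees a white Level-$(r-1)$ neighbor, so the copy's internal process propagates exactly as it does inside $BF(r-2)$ and forces all of Levels $0$ through $r-2$. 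Finally, once Levels $0$ through $r-1$ are entirely black, each remaining white Level-$r$ vertex is the unique white neighbor of its Level-$(r-1)$ partner in its horizontal binding diamond and is forced by it, completing the propagation.
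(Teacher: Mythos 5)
Your proof is correct and takes essentially the same route as the paper: the identical decomposition of $BF(r)$ into four vertex-disjoint copies of $BF(r-2)$ on Levels $0$ through $r-2$ plus the subgraph on Levels $r-1$ and $r$, yielding the recursion $\zeta_{e}(BF(r)) \leq 4\,\zeta_{e}(BF(r-2)) + 2^{r-1}$. The only differences are cosmetic --- the paper unrolls the recursion down to $\zeta_{e}(BF(4))=25$, obtaining the slightly sharper intermediate bound $\left(\frac{4r+9}{8}\right)2^{r-1}$ before weakening it to the stated one, whereas you close a clean induction directly; your explicit justification that the top edges blacken Level $r-1$ first, so the copies' internal forcing is not stalled, is in fact more careful than the paper's one-line assertion.
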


\begin{proof}
By Theorem 4.5, the result is true for $r$ = 4. For $r \geq 6$, $r$ even, $BF(r)$ contains four vertex disjoint copies of $BF(r-2)$ induced by vertices in Level 0 to Level $(r-2)$. The edges in the subgraph induced by vertices in Level $r$ and Level $(r-1)$ are not adjacent to any edge in the four isomorphic copies of $BF(r-2)$. Hence using recursion,\smallskip
\begin{eqnarray*}
    \zeta_{e}(BF(r))& \leq & 4\zeta_{e}(BF(r-2)) + 2^{r-1} \\
    & \leq &  4 (4 \zeta_{e}(BF(r-4)) + 2^{r-3}) + 2^{r-1} \\
    & = &  4^2 \zeta_{e}(BF(r-4)) + 2\times 2^{r-1} \\
    & \leq & 4^{\left(\dfrac {r-4} {2}\right)} \times \zeta_{e}(BF(r-(r-4))) + \left({\dfrac {r-4} {2}}\right) 2^{r-1} \\
    & = & 2^{r-4} \times 25 + (r-4)~ 2^{r-2} \\
    & = & \left({\dfrac {4r+9} {8}}\right) {2^{r-1}} \\
    & \leq & \left(\dfrac {r} {2} +2\right)~ 2^{r-1}.
\end{eqnarray*}

Thus $\zeta_{e}(BF(r)) \leq \left(\dfrac {r} {2} +2\right) 2^{r-1}; ~r \geq 4$ and $r$ even.
\end{proof}

 Combining Lemma 4.2 with Lemma 4.10, we arrive at the following result.

\begin{theorem}
For $r \geq 4$, $r$ even, $2^r \leq \zeta_{e}(BF(r)) \leq \left(\dfrac {r} {2} +2\right) 2^{r-1}$.
\end{theorem}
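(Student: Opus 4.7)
The plan is to observe that Theorem 4.11 is nothing more than the concatenation of two already-established inequalities, so no new combinatorial work is required at this stage; the proof is a one-line citation of the relevant lemmas, and I would present it as such.

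For the lower bound, I would invoke Lemma 4.2, which states $\zeta_e(BF(r)) \geq 2^r$ for all $r \geq 3$. Since the hypothesis $r \geq 4$, $r$ even, is a strict subcase of $r \geq 3$, the bound $\zeta_e(BF(r)) \geq 2^r$ applies immediately. The reasoning behind Lemma 4.2 (that each of the $2^r$ binding diamonds must contribute at least one edge to an edge-forcing set, since otherwise the two degree-$2$ binding vertices cannot be forced) is already settled and does not need to be reproduced here.

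For the upper bound, I would cite Lemma 4.10 directly: $\zeta_e(BF(r)) \leq \bigl(\tfrac{r}{2}+2\bigr)2^{r-1}$ for $r \geq 4$ even. The substantive content there is the recursive decomposition of $BF(r)$ into four vertex-disjoint copies of $BF(r-2)$ (sitting at Levels $0$ through $r-2$), together with the $2^{r-1}$ extra edges needed to handle the top two levels, unrolled down to the base case $r=4$ established in Theorem 4.5. Since that recursion has already been carried out, I would not redo the telescoping sum.

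Therefore, the entire proof I would write reads essentially: by Lemma 4.2, $\zeta_e(BF(r)) \geq 2^r$; by Lemma 4.10, $\zeta_e(BF(r)) \leq \bigl(\tfrac{r}{2}+2\bigr) 2^{r-1}$; chaining these two bounds yields the claimed double inequality. There is no real obstacle, since all the combinatorial effort was expended in proving Lemma 4.2 and Lemma 4.10; the only thing to verify is the trivial consistency check that $2^r \leq \bigl(\tfrac{r}{2}+2\bigr)2^{r-1}$ whenever $r \geq 4$, which reduces to $2 \leq \tfrac{r}{2}+2$, a triviality. Hence the theorem follows immediately.
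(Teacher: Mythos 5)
Your proposal matches the paper exactly: the authors also obtain Theorem 4.11 by simply combining the lower bound of Lemma 4.2 with the upper bound of Lemma 4.10, with no additional argument. Your added consistency check that $2^r \leq \left(\frac{r}{2}+2\right)2^{r-1}$ is a harmless extra remark not present in the paper.
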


\section{Conclusion}
In this paper, we have introduced the edge-forcing problem in line with `total forcing', `connected forcing' and `$k$-forcing' studied by various authors. We have established the NP-completeness of the edge-forcing problem. We have obtained a lower bound for the edge-forcing number of butterfly networks $BF(r)$, $r \geq 3$. Further, we have proved that this lower bound is sharp for $BF(r)$, $r$ = 3, 4, 5 and have obtained an upper bound for higher dimensions. Determining edge-forcing number for higher dimensions of butterfly networks is a challenging open problem.

\subsection*{Acknowledgments}
The authors thank Mr. Andrew Arokiaraj for his insightful comments on the computational complexity of the problem. Further, the authors would like to thank the anonymous reviewers for their detailed comments which has helped us to improve the paper to a great extent.
The work of R. Sundara Rajan is partially supported by Project No. 2/48(4)/ 2016/NBHM-R\&D-II/11580, National Board of Higher Mathematics (NBHM), Department of Atomic Energy (DAE), Government of India.

  %\nocite{*} \bibliographystyle{fundam}\bibliography{citations}

%%%%%%%%%%%%%%%%%%%%%%%%%%%%%%%%%%%%%%%%%%%%%%%%%%%%%%%%%%%%%%%%%%%%%%

\end{document}